\documentclass[11pt,twoside, leqno]{article}

\usepackage{amssymb}
\usepackage{amsmath}
\usepackage{mathrsfs}
\usepackage{amsthm}
\usepackage{amsfonts}
\usepackage{enumerate}
\usepackage{esint}
\usepackage{latexsym,amssymb}
\usepackage{color}

\usepackage[misc,geometry]{ifsym}

\allowdisplaybreaks

\textwidth=15cm
\textheight=21truecm
\oddsidemargin 0.35cm
\evensidemargin 0.35cm

\parindent=13pt

\newtheorem{theorem}{Theorem}[section]

\newtheorem{corollary}[theorem]{Corollary}
\newtheorem{proposition}[theorem]{Proposition}
\theoremstyle{definition}
\newtheorem{remark}[theorem]{Remark}

\numberwithin{equation}{section}

\numberwithin{equation}{section}
\pagestyle{myheadings}
\markboth{\footnotesize\rm\sc Yongyang Jin, Li Tang, Can Ye, Shoufeng Shen}
{\footnotesize\rm\sc Optimal  Hardy inequalities associated with multipolar Schr\"{o}dinger operators}

\begin{document}

\arraycolsep=1pt

\title{\bf\Large
Optimal  Hardy inequalities associated with multipolar Schr\"{o}dinger operators
\footnotetext {\hspace{-0.35cm}
2010 {\it Mathematics Subject Classification}. Primary: 26D10;
Secondary: 42B37.
\endgraf {\it Key words and phrases}.
 Hardy  inequalities, Sharp constant, multipolar Schr\"{o}dinger operators.
}}

\author{Yongyang Jin, Li Tang, Can Ye, Shoufeng Shen$^{\textrm{\Letter}}$}
\date{}
\maketitle

\vspace{-0.6cm}

\begin{center}
\begin{minipage}{13.5cm}
{\small {\bf Abstract} \quad
We proved some optimal Hardy inequalities in $\mathbb{R}^{N}$ which is closely related to multipolar Schr\"{o}dinger operators with  mean-value type potentials,  these sharp inequalities  imply some multipolar type Heisenberg inequalities. We also obtained some improved  multipolar Hardy inequalities on bounded domains, moreover, we got the range of the best Hardy constant for a specific Hardy inequality.}

\end{minipage}
\end{center}

%
%
%
%
%

\section{Introduction}
  A Hardy type inequality is said that there is a potential $V$ and a positive constant $\mu$ so that the following inequality
\begin{equation}\label{T-1-1}
\int_{\mathbb{R}^{N}}|\nabla u|^{2}dx\geq\mu\int_{\mathbb{R}^{N}}V|u|^{2}dx,
\end{equation}
holds. This issue is equivalent to study the positivity of Schr\"{o}dinger operators $-\Delta-\mu V$. Employing Sobolev embedding inequality $C_{N}\|u\|_{L^{2^*}(\mathbb{R}^{N})}^2\leq\|\nabla u\|_{L^{2}(\mathbb{R}^{N})}^2$(with sharp constant $C_{N}$), one obtain that $-\Delta-\mu V$ is nonnegative  if
\begin{equation}\label{T-1-2}
\|V\|_{L^{\frac{N}{2}}(\mathbb{R}^{N})}\leq\frac{C_{N}}{\mu}.
\end{equation}
See \cite{1} for more discussion of the potential energy operator $V$.

When $N\geq 3$, the well-known Hardy potentials $V=|x|^{-2}$, or so-called inverse square potential, does not satisfy (\ref{T-1-2}). In this case, we have the classical Hardy inequality with sharp constant
\begin{equation}\label{T-1-3}
\int_{\mathbb{R}^{N}}|\nabla u|^{2}dx\geq\frac{(N-2)^{2}}{4}\int_{\mathbb{R}^{N}}\frac{|u|^{2}}{|x|^{2}}dx.
\end{equation}

We mention that it is easy to see (\ref{T-1-3}) and Plancherel formula imply the Heisenberg inequality,
\begin{equation}\label{T-1-H}\int_{\mathbb{R}^{N}}|x|^{2}|u|^{2}dx\int_{\mathbb{R}^{N}}|\xi|^{2}|\hat{u}(\xi)|^{2}d\xi\geq \frac{(N-2)^{2}}{4},\end{equation}
where $\|u\|_{L^2{\mathbb{R}^{N}}}=1$ and $\hat{u}(\xi)=\frac{1}{(2\pi)^{n/2}}\int_{\mathbb{R}^{N}}e^{-i\xi\cdot x}u(x)dx$.
It's well known that (\ref{T-1-H}) is the beautiful mathematical description of the famous "Uncertainty Principle" in quantum mechanics.

There exists a great amount of literature on the generalization and improvement of (\ref{T-1-3}), see \cite{5, 7, 3, 6, 8, 9, 10, 4, 13, 11, 12} and the references therein.

There are also many works related to multipolar potentials $V(x)=\sum_{i=1}^{n}\frac{\mu_{i}}{|x-a_{i}|^2}$ with $n$ poles $a_{1},a_{2},\ldots,a_{n}$. These type of multipolar potentials are related to
the interaction of a finite number of electric dipoles. This form of systems are characterized by Hartree-Fock type model, which is the most commonly used model in Quantum Molecular \cite{14}. These
potentials are also applied in other fields such as combustion models and quantum cosmological models.

Consider the quadratic functional with respect to Schr\"{o}dinger operator $-\Delta-\sum_{i=1}^{n}\frac{\mu_{i}}{|x-a_{i}|^2}$,
\begin{equation*}
Q[u]:=\int_{\mathbb{R}^{N}}|\nabla u|^{2}dx-\sum_{i=1}^{n}\mu_{i}\int_{\mathbb{R}^{N}}\frac{|u|^{2}}{|x-a_{i}|^2}dx.
\end{equation*}
It is complicated to study the positivity of $Q[u]$ due to the relative position and interaction among the poles. The author in \cite{15} proved that Schr\"{o}dinger operator
$-\Delta-\sum_{i=1}^{n}\frac{\mu_{i}}{|x-a_{i}|^2}$ is positive if and only if $\sum_{i=1}^{n}\mu_{i}^{+}\leq\frac{(N-2)^{2}}{4}$(where $\mu_{i}^+=\max\{\mu_{i},0\}$) for any configuration of $a_{1},a_{2},\ldots,a_{n}$;
conversely, if $\sum_{i=1}^{n}\mu_{i}^{+}>\frac{(N-2)^{2}}{4}$, then there exist a configuration of $a_{1},a_{2},\ldots,a_{n}$ such that $Q[u]$ is not positive. These results then have been improved by authors in \cite{16}
that the existence of a configuration so that the quadratic form $Q[u]$ is positive is equivalent to $\mu_{i}\leq\frac{(N-2)^{2}}{4}$ for any $i=1,2,\ldots,n$ and $\sum_{i=1}^{n}\mu_{i}\leq\frac{(N-2)^{2}}{4}$. This
shows that the critical mass $\frac{(N-2)^{2}}{4}\frac{1}{|x-a_{i}|^2}$ for certain singular pole $a_{i}$ can be infinitely approximated, though all the other $\mu_{j}$ are small enough right now. Bosi, Dolbeault,
Esteban \cite{17} obtained a lower bound of the spectrum of the Schr\"{o}dinger operators $-\Delta-\mu\sum_{i=1}^{n}\frac{1}{|x-a_{i}|^2}$, $\mu\in(0,\frac{(N-2)^{2}}{4}]$, $n\geq 2$. In other words, consider
$\mu\in(0,\frac{(N-2)^{2}}{4}]$, $n\geq 2$, there exists a nonnegative constant $K_{n}\leq\pi^{2}$ such that
\begin{equation}\label{T-1-4}
\int_{\mathbb{R}^{N}}|\nabla u|^{2}dx+\frac{4K_{n}+4(n+1)\mu}{d^2}\int_{\mathbb{R}^{N}}|u|^{2}dx\geq\mu\sum_{i=1}^{n}\int_{\mathbb{R}^{N}}\frac{|u|^{2}}{|x-a_{i}|^2}dx,u\in H^{1}(\mathbb{R}^{N}),
\end{equation}
where $d:=\min\limits_{1\leq i\neq j\leq n}|a_{i}-a_{j}|$. Their proof depends on the well-known "IMS"  truncation method (see \cite{18,19}). Moreover, in an attempt to remove the lower order term, the author in \cite{17} obtained the following inequality for any $u\in H^{1}(\mathbb{R}^{N})$ and $n\geq 2$:
\begin{equation}\label{T-1-5}
\begin{split}
\int_{\mathbb{R}^{N}}|\nabla u|^{2}dx\geq & \frac{(N-2)^{2}}{4n}\sum_{i=1}^{n}\int_{\mathbb{R}^{N}}\frac{|u|^{2}}{|x-a_{i}|^2}dx \\
    & +\frac{(N-2)^{2}}{4n^2}\sum_{1\leq i<j\leq n}^{n}\int_{\mathbb{R}^{N}}\frac{|a_{i}-a_{j}|^2}{|x-a_{i}|^{2}|x-a_{j}|^{2}}|u|^{2}dx.
\end{split}
\end{equation}
When $x\rightarrow a_{i}$, the total mass near $a_{i}$ is $\frac{(N-2)^{2}}{4}\frac{2n-1}{n^2}\frac{1}{|x-a_{i}|^2}$, which is strictly smaller than $\frac{(N-2)^{2}}{4}\frac{1}{|x-a_{i}|^2}$.

The result above was improved by the authors in \cite{23} with an optimal weight. Specifically, when $n\geq2$, they proved the following inequality
\begin{equation}\label{T-1-6}
\int_{\mathbb{R}^{N}}|\nabla u|^{2}dx\geq\frac{(N-2)^{2}}{n^2}\sum_{1\leq i<j\leq n}\int_{\mathbb{R}^{N}}\frac{|a_{i}-a_{j}|^2}{|x-a_{i}|^{2}|x-a_{j}|^{2}}|u|^{2}dx,
\end{equation}
where the constant $\frac{(N-2)^{2}}{n^2}$ is sharp. This inequality provides a sharp positive singular quadratic potential tends gradually to
\begin{equation*}
\frac{(N-2)^{2}}{4}\frac{4n-4}{n^2}\frac{1}{|x-a_{i}|^2}
\end{equation*}
at any $a_{i},i=1,\ldots,n$, which is strictly larger than $\frac{(N-2)^{2}}{4}\frac{2n-1}{n^2}\frac{1}{|x-a_{i}|^2}$. So inequality (\ref{T-1-6}) can be seen as an improvement of (\ref{T-1-5}). By parallelogram rule in $\mathbb{R}^{N}$, inequality (\ref{T-1-6}) is equivalent to inequality
\begin{equation}\label{T-1-7}
\int_{\mathbb{R}^{N}}|\nabla u|^{2}dx\geq\frac{(N-2)^{2}}{n^2}\sum_{1\leq i<j\leq n}\int_{\mathbb{R}^{N}}\left|\frac{x-a_{i}}{|x-a_{i}|^{2}}-\frac{x-a_{j}}{|x-a_{j}|^{2}}\right|^{2}|u|^{2}dx.
\end{equation}
Later Devyver, Fraas and Pinchover in \cite{24} obtained another multipolar Hardy inequality for any $u\in H^{1}(\mathbb{R}^{N})$ reads as
\begin{equation}\label{T-1-8}
\int_{\mathbb{R}^{N}}|\nabla u|^{2}\geq \left(\frac{N-2)}{n+1}\right)^{2}\int_{\mathbb{R}^{N}}\left[\sum_{i=1}^{n}\frac{1}{|x-a_{i}|^2}+\sum_{1\leq i<j\leq n}^{n}\int_{\mathbb{R}^{N}}\frac{|a_{i}-a_{j}|^2}{|x-a_{i}|^{2}|x-a_{j}|^{2}}\right]|u|^{2}.
\end{equation}
The potential arise in (\ref{T-1-8}) is smaller than that in (\ref{T-1-6}) near every poles as it behaves asymptotically like
\begin{equation*}
\frac{(N-2)^{2}}{4}\frac{4n}{(n+1)^2}\frac{1}{|x-a_{i}|^2}.
\end{equation*}
However, authors in \cite{24} proved that the potential in (\ref{T-1-8}) is critical, i.e. inequality (\ref{T-1-8}) is impossible to be further improved. Actually, they also proved the criticality of the potential correlated with (\ref{T-1-6}).

We also mention some other results of multipolar Hardy inequalities: the authors in \cite{25} consider the inequality (\ref{T-1-6}) or (\ref{T-1-7}) in a domain $\Omega\subset\mathbb{R}^{N}$; the inequalities (\ref{T-1-6}) and (\ref{T-1-7})
are studied on Riemannion manifolds in \cite{26}(It is worth mentioning that the potentials in (\ref{T-1-6}) and (\ref{T-1-7}) are not equivalent anymore in general Riemannion manifolds); the authors in \cite{27} consider multipolar
Poincar\'{e}-Hardy inequalities on Cartan-Hadamard manifolds, which generalized the results in \cite{28} for single singularity.

Our goal in this paper is to consider mean-value type multipolar potentials so that the corresponding Hardy inequalities holds. This is motivated by noticing that the potential $\frac{1}{n}\sum_{i=1}^{n}\frac{1}{|x-a_{i}|^2}$ is just an
arithmetic mean of $n$ numbers $\frac{1}{|x-a_{i}|^2},i=1,2,\ldots,n$. In section 2 we list the main results of this paper. In section 3 we give the proof of Theorem 2.1 and 2.2. In the last section we obtain some improved  multipolar Hardy inequalities on bounded domains.

\section{Main results}
For  $n$ different points $a_{1},\ldots,a_{n}$ in $\mathbb{R}^{N}$, we denote that $d:=\min\limits_{1\leq i\neq j\leq n}|a_{i}-a_{j}|>0$. In this section we consider the following  mean-value type potentials
\begin{equation*}
V_{\lambda}(d_{1},d_{2},\ldots,d_{n}):=\left\{
\begin{aligned}
\left(\sum\limits_{i=1}\limits^{n}\alpha_{i}|x-a_{i}|^{-2\lambda}\right)^{\frac{1}{\lambda}}, & \lambda\in \mathbb{R}\setminus\{0\}, \\
\prod\limits_{i=1}\limits^{n}|x-a_{i}|^{-2\alpha_{i}}\hspace{9mm},  & \lambda=0,
\end{aligned}
\right.
\end{equation*}
where $\alpha_{i}\geq 0,i=1,2,\ldots,n$, $\sum\limits_{i=1}\limits^{n}\alpha_{i}=1$. We call $V_{-1}$ the  powered harmonic mean, $V_{0}$ powered geometric mean, $V_{1}$ powered arithmetic mean and $V_{2}$ powered quadratic mean respectively.
It is well-known that $V_{\lambda}$ is an increasing function  on $\lambda$, so we have the following inequalities
\begin{equation*}
\min_{1\leq i\leq n}|x-a_{i}|^{-2}\leq V_{-1} \leq V_{0}\leq V_{1}\leq V_{2}\leq\max_{1\leq i\leq n}|x-a_{i}|^{-2}.
\end{equation*}
Then we consider two potentials $V_{+\infty}$ and $V_{-\infty}$ in $\mathbb{R}^{N}$, where
\begin{equation*}
V_{+\infty}:=\max_{1\leq i\leq n}|x-a_{i}|^{-2},
\end{equation*}
\begin{equation*}
V_{-\infty}:=\min_{1\leq i\leq n}|x-a_{i}|^{-2}.
\end{equation*}
Our main results are as follow:
\begin{theorem}
We assert that the following multipolar Hardy  inequality holds for any $u\in H^{1}(\mathbb{R}^{N})$
\begin{equation}\label{*1}
\int_{\mathbb{R}^{N}}|\nabla u|^{2}dx\geq\frac{(N-2)^{2}}{4}\int_{\mathbb{R}^{N}}V_{+\infty}|u|^{2}dx,
\end{equation}
and the constant $\frac{(N-2)^{2}}{4}$ is sharp.
\end{theorem}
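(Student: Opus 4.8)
The plan is to prove \eqref{*1} by localizing the sharp one-pole Hardy inequality to the Voronoi cells of the poles, and then to read off sharpness from a single-pole concentration.

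First I would exploit that $V_{+\infty}=\max_{1\le i\le n}|x-a_i|^{-2}$ is governed by the nearest pole. Writing $\rho(x):=\min_i|x-a_i|=\mathrm{dist}(x,\{a_1,\dots,a_n\})$, we have $V_{+\infty}=\rho^{-2}$, so \eqref{*1} is precisely the Hardy inequality for the distance to the finite set $\{a_i\}$. Decompose $\mathbb{R}^N=\bigcup_i\Omega_i$ into the Voronoi cells $\Omega_i:=\{x:|x-a_i|\le|x-a_j|\ \forall j\}$. Each $\Omega_i$ is a convex polyhedron containing its pole $a_i$, on which $V_{+\infty}=|x-a_i|^{-2}$, and the cells meet along pieces $F_{ij}$ of the bisector hyperplanes $\{|x-a_i|=|x-a_j|\}$. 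Thus $\int V_{+\infty}|u|^2=\sum_i\int_{\Omega_i}|x-a_i|^{-2}|u|^2$, and it suffices to add up one-pole estimates on the cells.

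On each cell I would use the completion-of-square form of the sharp Hardy inequality with pole $a_i$. With $\mathbf F_i:=\tfrac{N-2}{2}\tfrac{x-a_i}{|x-a_i|^2}$ one has $|\nabla u|^2=|\nabla u+\mathbf F_i u|^2-\mathbf F_i\cdot\nabla(|u|^2)-|\mathbf F_i|^2|u|^2$, and since $\mathrm{div}\,\mathbf F_i-|\mathbf F_i|^2=\tfrac{(N-2)^2}{4}|x-a_i|^{-2}$, integrating over $\Omega_i$ and applying the divergence theorem gives
\begin{equation*}
\int_{\Omega_i}|\nabla u|^2=\frac{(N-2)^2}{4}\int_{\Omega_i}\frac{|u|^2}{|x-a_i|^2}+\int_{\Omega_i}\big|\nabla u+\mathbf F_i u\big|^2-\int_{\partial\Omega_i}|u|^2\,\mathbf F_i\cdot\nu_i\,dS .
\end{equation*}
Summing over $i$ collapses the bulk to $\int_{\mathbb{R}^N}|\nabla u|^2$ and the potential term to $\tfrac{(N-2)^2}{4}\int V_{+\infty}|u|^2$, leaving the nonnegative remainders $\sum_i\int_{\Omega_i}|\nabla u+\mathbf F_i u|^2$ together with the interface terms, which pair up across each face $F_{ij}$.

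The main obstacle is exactly these interface terms. On $F_{ij}$, where $|x-a_i|=|x-a_j|=:r$ and the outward normals satisfy $\nu_j=-\nu_i$, a direct computation gives $\mathbf F_i\cdot\nu_i+\mathbf F_j\cdot\nu_j=\tfrac{N-2}{2r^2}(a_j-a_i)\cdot\nu_i>0$, so the paired term has the \emph{unfavorable} sign, being subtracted. Equivalently, for $\phi:=\rho^{-(N-2)/2}$ one finds $-\Delta\phi-\tfrac{(N-2)^2}{4}\rho^{-2}\phi=-\tfrac{N-2}{2}\rho^{-(N-2)/2-1}\mu$, where $\mu\ge0$ is the (concave) jump measure of $\rho$ on the medial axis $\bigcup_{i<j}F_{ij}$; thus the natural candidate is only a \emph{subsolution}, and \eqref{*1} is equivalent to the trace estimate $\int_{\mathbb R^N}|\nabla u+\mathbf F u|^2\ge\tfrac{N-2}{2}\int\rho^{-1}|u|^2\,d\mu$ with $\mathbf F=\tfrac{N-2}{2}\nabla\rho/\rho$. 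I would close this gap by absorbing the interface trace into the bulk remainder: since each cell is convex with $a_i$ interior, $(x-a_i)\cdot\nu_i\ge0$ on $\partial\Omega_i$, and a weighted trace inequality on the convex cell (in the variable $w=u|x-a_i|^{(N-2)/2}$, with weight $|x-a_i|^{-(N-2)}$) should control $\int_{F_{ij}}r^{-2}|u|^2\,dS$ by $\int_{\Omega_i}|x-a_i|^{-(N-2)}|\nabla w|^2$. A technically safer alternative is to establish the inequality first for the smooth means $V_\lambda$ ($\lambda<\infty$), which carry no medial-axis singularity, and then let $\lambda\to\infty$: since $V_\lambda\uparrow V_{+\infty}$ monotonically, the monotone convergence theorem upgrades the uniform bound to \eqref{*1}.

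Finally, sharpness requires only a one-pole test. Fixing $a_1$ and taking the standard near-extremals $u_\varepsilon(x)=|x-a_1|^{-(N-2)/2+\varepsilon}\eta(x)$ with $\eta$ a cutoff supported in a small ball around $a_1$ contained in the interior of $\Omega_1$, we have $V_{+\infty}=|x-a_1|^{-2}$ on $\mathrm{supp}\,u_\varepsilon$, so $\int|\nabla u_\varepsilon|^2/\int V_{+\infty}|u_\varepsilon|^2$ reduces to the one-pole Rayleigh quotient and tends to $\tfrac{(N-2)^2}{4}$ as $\varepsilon\to0^+$; hence no larger constant is admissible.
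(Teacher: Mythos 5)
You set the proof up exactly as the paper does---the ground-state substitution $u=v\varphi$ with $\varphi=\rho^{-(N-2)/2}$, $\rho(x)=\min_i|x-a_i|$, summed over the Voronoi cells---and, unlike the paper, you correctly identify the decisive obstacle: on each bisector face $F_{ij}$ the paired flux $\mathbf F_i\cdot\nu_i+\mathbf F_j\cdot\nu_j=\tfrac{N-2}{2r^2}|a_i-a_j|>0$ enters with the unfavorable sign, i.e.\ $\varphi$ is only a subsolution and its distributional Laplacian charges $T$ with a singular measure of the wrong sign. This is precisely the step that the paper's derivation of \eqref{T-3-3} glosses over: the remark that $\tilde T$ ``has measure zero'' lets one ignore $\tilde T$ in volume integrals, but not in the integration by parts, which produces a surface integral over $T$ that does not vanish and is subtracted.

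However, neither of your two proposed repairs can close this gap, and the identity you wrote down in fact shows that the stated inequality fails for $n\ge2$. Take $v$ equal to $1$ on $\{\rho\ge\delta\}\cap B_R$, cut off logarithmically to $0$ on $\{\rho\le\delta^2\}$ and outside $B_{R^2}$, and set $u=v\rho^{-(N-2)/2}$, which is compactly supported and Lipschitz, hence in $H^1(\mathbb{R}^N)$. Then the bulk remainder $\int\rho^{-(N-2)}|\nabla v|^2\,dx=O(1/\log(1/\delta))+O(1/\log R)\to0$, while the interface term $\tfrac{N-2}{2}\sum_{i<j}|a_i-a_j|\int_{F_{ij}}r^{-N}v^2\,dS$ stays bounded below by a positive constant (on $F_{ij}$ one has $r\ge d/2$, so the cutoffs do not touch it, and $\int_{F_{ij}}r^{-N}dS$ converges). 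Hence $\int|\nabla u|^2-\tfrac{(N-2)^2}{4}\int V_{+\infty}|u|^2<0$ for $\delta$ small and $R$ large. Consequently the weighted trace inequality you propose is false, and the approximation route via $V_\lambda\uparrow V_{+\infty}$ cannot work either: for this fixed $u$ one has $\int V_\lambda|u|^2\to\int V_{+\infty}|u|^2$, so the $V_\lambda$ inequality already fails for large finite $\lambda$ (and in the paper Corollary 2.3 is itself deduced from Theorems 2.1--2.2, so that route would be circular anyway). This is consistent with the results quoted in the introduction, by which an $n$-polar potential with $n\ge2$ cannot carry the full critical mass $\tfrac{(N-2)^2}{4}|x-a_i|^{-2}$ at every pole. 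What does survive is the inequality for $u$ vanishing on the bisector set $T$ (then the surface terms drop out), and your sharpness argument, which is correct and matches the paper's; note also that the analogous proof for $V_{-\infty}$ (Theorem 2.2) is salvageable because there the kink of $\varphi$ is concave and the neglected surface term has the favorable sign.
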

\begin{theorem}
We assert that the following multipolar Hardy  inequality holds for any $u\in H^{1}(\mathbb{R}^{N})$
\begin{equation}\label{*2}
\int_{\mathbb{R}^{N}}|\nabla u|^{2}dx\geq\frac{(N-2)^{2}}{4}\int_{\mathbb{R}^{N}}V_{-\infty}|u|^{2}dx,
\end{equation}
and the constant $\frac{(N-2)^{2}}{4}$ is sharp.
\end{theorem}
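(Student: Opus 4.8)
The plan is to treat the inequality and the sharpness of the constant separately; the inequality itself is almost immediate, and the whole substance lies in the optimality.

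\emph{The inequality.} Since $t\mapsto t^{-2}$ is decreasing on $(0,\infty)$, for every fixed index $j$ we have
\[
V_{-\infty}(x)=\min_{1\le i\le n}|x-a_i|^{-2}\le |x-a_j|^{-2}.
\]
Hence the classical single-pole Hardy inequality (\ref{T-1-3}), recentred at $a_j$, already gives
\[
\int_{\mathbb{R}^{N}}|\nabla u|^{2}\,dx\ge\frac{(N-2)^{2}}{4}\int_{\mathbb{R}^{N}}\frac{|u|^{2}}{|x-a_j|^{2}}\,dx\ge\frac{(N-2)^{2}}{4}\int_{\mathbb{R}^{N}}V_{-\infty}|u|^{2}\,dx,
\]
so (\ref{*2}) holds with no further argument.

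\emph{The sharpness.} The decisive structural point is that, in contrast with $V_{+\infty}$, the potential $V_{-\infty}(x)=(\max_{1\le i\le n}|x-a_i|)^{-2}$ has no singularity: the poles being distinct, it is bounded on all of $\mathbb{R}^N$, and its only inverse-square behaviour is at infinity, where
\[
\frac{V_{-\infty}(x)}{|x|^{-2}}=\frac{|x|^{2}}{(\max_i|x-a_i|)^{2}}\longrightarrow1
\]
uniformly, because $\big||x|-|a_i|\big|\le|x-a_i|\le|x|+|a_i|$ forces $\max_i|x-a_i|=|x|+O(1)$. Therefore the extremizing sequence must concentrate at infinity, where $V_{-\infty}$ is asymptotically the pure Hardy potential.

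Accordingly, I would test with the scale-invariant Hardy profile $w(x)=|x|^{-(N-2)/2}$, which satisfies the pointwise identity $|\nabla w|^{2}=\frac{(N-2)^{2}}{4}|x|^{-2}w^{2}$. For $R>1$ let $\eta_R$ be a radial logarithmic cutoff equal to $1$ on $\{R\le|x|\le R^{2}\}$, supported in $\{R/2\le|x|\le 2R^{2}\}$, with $|\nabla\eta_R|\ls|x|^{-1}$, and put $u_R=w\,\eta_R$. Expanding $|\nabla u_R|^{2}=|\nabla w|^{2}\eta_R^{2}+2\eta_R w\,\nabla w\cdot\nabla\eta_R+w^{2}|\nabla\eta_R|^{2}$ and integrating, the two transition layers each contribute only a bounded amount to the cross term and to $\int w^{2}|\nabla\eta_R|^{2}$ (on an annulus of fixed multiplicative width the relevant density is $\sim|x|^{-N}$, whose integral is $O(1)$), so that
\[
\int_{\mathbb{R}^{N}}|\nabla u_R|^{2}\,dx=\frac{(N-2)^{2}}{4}\int_{\mathbb{R}^{N}}|x|^{-2}u_R^{2}\,dx+O(1),
\]
while $\int_{\mathbb{R}^N}|x|^{-2}u_R^{2}\,dx\ge\int_{\{R\le|x|\le R^{2}\}}|x|^{-N}\,dx\gs\log R\to\infty$. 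Since $\supp u_R\subset\{|x|\ge R/2\}$, the uniform asymptotic above gives $V_{-\infty}\ge(1-\epsilon_R)|x|^{-2}$ on $\supp u_R$ with $\epsilon_R\to0$, whence
\[
\frac{\int_{\mathbb{R}^N}|\nabla u_R|^{2}}{\int_{\mathbb{R}^N}V_{-\infty}u_R^{2}}\le\frac{1}{1-\epsilon_R}\left(\frac{(N-2)^{2}}{4}+\frac{O(1)}{\int_{\mathbb{R}^N}|x|^{-2}u_R^{2}}\right)\longrightarrow\frac{(N-2)^{2}}{4}.
\]
Combined with the lower bound from the first step, this pins the infimum of the Rayleigh quotient to $\frac{(N-2)^{2}}{4}$, proving that the constant cannot be enlarged.

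\emph{Main obstacle.} The inequality is trivial, so the only real work—and the place where care is needed—is the error bookkeeping in the last step: the logarithmic cutoff must be chosen so that both the cross term and the gradient-of-cutoff term stay $O(1)$ and are thus negligible against the logarithmically divergent principal term. This is the standard ``concentration at infinity'' mechanism; the feature worth stressing is that for $V_{-\infty}$ the sharp constant is governed entirely by the behaviour at infinity, unlike Theorem 2.1 where $V_{+\infty}$ is singular at each pole and forces concentration at a single point.
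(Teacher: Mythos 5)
Your proposal is correct, and both halves take a genuinely different route from the paper. For the inequality, you reduce everything to the classical single-pole Hardy inequality via the pointwise bound $V_{-\infty}\le |x-a_j|^{-2}$ for any fixed $j$; this is more elementary than the paper's argument, which runs the Hardy-type identity (\ref{T-3-1}) with $\varphi=\min_{1\le i\le n}|x-a_i|^{\frac{2-N}{2}}$ and a partition of $\mathbb{R}^N$ into regions $\tilde E_i$ where the minimum is attained by a single index, computing $-\Delta\varphi/\varphi=\frac{(N-2)^2}{4}V_{-\infty}$ a.e. The paper's route costs more (one must handle the non-differentiability of $\varphi$ across the zero-measure set $\tilde T$) but buys the exact remainder identity $\int|\nabla u|^2-\frac{(N-2)^2}{4}\int V_{-\infty}|u|^2=\int|\nabla(u\varphi^{-1})|^2\varphi^2$, whereas your domination argument only yields the inequality. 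For sharpness, both arguments exploit concentration at infinity (your structural observation that $V_{-\infty}=(\max_i|x-a_i|)^{-2}$ is bounded and asymptotic to $|x|^{-2}$ is exactly the right mechanism), but the implementations differ: you use truncated Hardy profiles $|x|^{-(N-2)/2}\eta_R$ on expanding annuli, with $O(1)$ cutoff errors against a $\log R$ divergence, while the paper tests with the global family $u_\varepsilon=\min_i|x-a_i|^{\frac{2-N}{2}-\varepsilon}$, for which the Rayleigh quotient evaluates \emph{exactly} to $(\frac{N-2}{2}+\varepsilon)^2$ on each $\tilde E_i$ (these sets avoid the poles, so $u_\varepsilon$ is bounded and the integrals are finite for $\varepsilon>0$). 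Your bookkeeping of the cross term and the $w^2|\nabla\eta_R|^2$ term is correct, so the argument closes; the paper's choice merely avoids that bookkeeping at the price of introducing the $\tilde E_i$ decomposition.
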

Recall that  $V_{-\infty}\leq V_{\lambda}\leq V_{+\infty}$, we get the following result from Theorem 2.1 and Theorem2.2.
\begin{corollary}
The following inequality holds
\begin{equation*}
\int_{\mathbb{R}^{N}}|\nabla u|^{2}dx\geq\frac{(N-2)^{2}}{4}\int_{\mathbb{R}^{N}}V_{\lambda}|u|^{2}dx, u\in H^{1}(\mathbb{R}^{N}),
\end{equation*}
moreover, the constant $\frac{(N-2)^{2}}{4}$ is sharp.
\end{corollary}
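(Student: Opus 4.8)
The plan is to deduce the Corollary from Theorems 2.1 and 2.2 by a two-sided squeeze, using only the monotonicity $V_{-\infty}\le V_{\lambda}\le V_{+\infty}$ recorded just above the statement. The key structural observation is that the two theorems play complementary roles: Theorem 2.1, which treats the \emph{largest} potential $V_{+\infty}$, furnishes the validity of the inequality, whereas Theorem 2.2, which treats the \emph{smallest} potential $V_{-\infty}$, furnishes the sharpness of the constant. Thus I would organize the argument as a lower bound coming from above and a matching upper bound coming from below.

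For the inequality itself I would fix $u\in H^{1}(\mathbb{R}^{N})$ and use the pointwise bound $V_{\lambda}\le V_{+\infty}$ together with $|u|^{2}\ge 0$ to obtain
\[
\int_{\mathbb{R}^{N}}V_{\lambda}|u|^{2}\,dx\le\int_{\mathbb{R}^{N}}V_{+\infty}|u|^{2}\,dx .
\]
Combining this with Theorem 2.1 gives at once
\[
\int_{\mathbb{R}^{N}}|\nabla u|^{2}\,dx\ge\frac{(N-2)^{2}}{4}\int_{\mathbb{R}^{N}}V_{+\infty}|u|^{2}\,dx\ge\frac{(N-2)^{2}}{4}\int_{\mathbb{R}^{N}}V_{\lambda}|u|^{2}\,dx ,
\]
which is the asserted inequality for every admissible $\lambda$. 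Note that this half uses only the \emph{inequality} part of Theorem 2.1, not its sharpness.

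The sharpness is where Theorem 2.2 enters, and I regard identifying the correct direction of comparison as the only real subtlety: one must compare \emph{downward} to $V_{-\infty}$ rather than upward to $V_{+\infty}$. Concretely, let $C$ be any constant for which $\int_{\mathbb{R}^{N}}|\nabla u|^{2}\,dx\ge C\int_{\mathbb{R}^{N}}V_{\lambda}|u|^{2}\,dx$ holds for all $u\in H^{1}(\mathbb{R}^{N})$. Since $V_{-\infty}\le V_{\lambda}$ gives $\int_{\mathbb{R}^{N}}V_{-\infty}|u|^{2}\,dx\le\int_{\mathbb{R}^{N}}V_{\lambda}|u|^{2}\,dx$, the same $C$ satisfies $\int_{\mathbb{R}^{N}}|\nabla u|^{2}\,dx\ge C\int_{\mathbb{R}^{N}}V_{-\infty}|u|^{2}\,dx$ for all $u$. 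By the sharpness assertion of Theorem 2.2, the largest admissible constant for $V_{-\infty}$ is exactly $\frac{(N-2)^{2}}{4}$, so $C\le\frac{(N-2)^{2}}{4}$. As the inequality already holds with $C=\frac{(N-2)^{2}}{4}$, this value is the best constant for $V_{\lambda}$ as well.

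Equivalently, one may phrase the whole proof in terms of Rayleigh quotients: for every nonzero $u\in H^{1}(\mathbb{R}^{N})$ the pointwise inequalities $V_{-\infty}\le V_{\lambda}\le V_{+\infty}$ force
\[
\frac{\int_{\mathbb{R}^{N}}|\nabla u|^{2}\,dx}{\int_{\mathbb{R}^{N}}V_{+\infty}|u|^{2}\,dx}\le\frac{\int_{\mathbb{R}^{N}}|\nabla u|^{2}\,dx}{\int_{\mathbb{R}^{N}}V_{\lambda}|u|^{2}\,dx}\le\frac{\int_{\mathbb{R}^{N}}|\nabla u|^{2}\,dx}{\int_{\mathbb{R}^{N}}V_{-\infty}|u|^{2}\,dx},
\]
and taking the infimum over $u$ traps the best constant for $V_{\lambda}$ between the two sharp values $\frac{(N-2)^{2}}{4}$ delivered by Theorems 2.1 and 2.2. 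The only points meriting a line of care are that $\int_{\mathbb{R}^{N}}V_{\lambda}|u|^{2}\,dx$ and $\int_{\mathbb{R}^{N}}V_{-\infty}|u|^{2}\,dx$ are strictly positive and finite for nonzero $u\in H^{1}(\mathbb{R}^{N})$ (positivity because these weights are positive almost everywhere, finiteness because the established inequalities bound them by $\int_{\mathbb{R}^{N}}|\nabla u|^{2}\,dx$), so the quotients are well defined. There is no genuine analytic obstacle beyond invoking the two theorems; the entire content lies in arranging the comparison in the correct direction for each half of the statement.
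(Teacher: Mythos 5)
Your proposal is correct and is exactly the argument the paper intends: the inequality follows from Theorem 2.1 via $V_{\lambda}\le V_{+\infty}$, and the sharpness follows from Theorem 2.2 via $V_{-\infty}\le V_{\lambda}$, which is precisely the content of the paper's one-line remark preceding the corollary. Nothing further is needed.
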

When $\lambda=1$ we have:
\begin{corollary}
The following multipolar Hardy inequality holds
\begin{equation}\label{*3}
\int_{\mathbb{R}^{N}}|\nabla u|^{2}dx\geq\frac{(N-2)^{2}}{4}\sum\limits_{i=1}\limits^{n}\int_{\mathbb{R}^{N}}\alpha_{i}\frac{|u|^{2}}{|x-a_{i}|^{2}}dx, u\in H^{1}(\mathbb{R}^{N}),
\end{equation}
the constant $\frac{(N-2)^{2}}{4}$ is sharp.
Especially, we have the following multipolar Hardy inequality
 \begin{equation}\label{}
\int_{\mathbb{R}^{N}}|\nabla u|^{2}dx\geq\frac{(N-2)^{2}}{4n}\sum\limits_{i=1}\limits^{n}\int_{\mathbb{R}^{N}}\frac{|u|^{2}}{|x-a_{i}|^{2}}dx, u\in H^{1}(\mathbb{R}^{N}),
\end{equation}
the constant $\frac{(N-2)^{2}}{4n}$ is sharp.
\end{corollary}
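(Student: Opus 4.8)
The plan is to deduce both displayed inequalities directly from Corollary 2.3 and to reduce the two sharpness assertions to the sharpness of the classical one-pole Hardy inequality (\ref{T-1-3}) via a large-scale test-function construction.

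First I would observe that the powered arithmetic mean is $V_1=\sum_{i=1}^n\alpha_i|x-a_i|^{-2}$, so that Corollary 2.3 with $\lambda=1$ states exactly
\begin{equation*}
\int_{\mathbb{R}^N}|\nabla u|^2\,dx\geq\frac{(N-2)^2}{4}\int_{\mathbb{R}^N}V_1|u|^2\,dx=\frac{(N-2)^2}{4}\sum_{i=1}^n\alpha_i\int_{\mathbb{R}^N}\frac{|u|^2}{|x-a_i|^2}\,dx,
\end{equation*}
which is (\ref{*3}). Choosing the uniform weights $\alpha_i=1/n$, which satisfy $\alpha_i\geq0$ and $\sum_{i=1}^n\alpha_i=1$, factors $1/n$ out of the sum and gives the second inequality with constant $\frac{(N-2)^2}{4n}$. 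Thus the lower bounds require no new work beyond Corollary 2.3.

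The content of the statement lies in the two sharpness claims, and here the key observation is that, because $\sum_i\alpha_i=1$, the potential $V_1$ collapses to a single inverse-square potential at the origin on large scales: if $|a_i|\leq C$ for all $i$ and $|x|=r$ is large, then $|x-a_i|=r(1+O(C/r))$, whence $V_1(x)=r^{-2}(1+O(C/r))$ and $\sum_{i=1}^n|x-a_i|^{-2}=nr^{-2}(1+O(C/r))$. I would then feed the standard minimizing sequence of (\ref{T-1-3}) into this picture: take the extremal profile $|x|^{-(N-2)/2}$ smoothly truncated to an annulus $\{\rho\leq|x|\leq\rho R\}$, pushing the annulus outward ($\rho\to\infty$) while widening it ($R\to\infty$). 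After cancelling the common angular constant, the radial integration gives $\int|u|^2/|x|^2\sim\log R$ and $\int|\nabla u|^2=\frac{(N-2)^2}{4}\log R+O(1)$, so that
\begin{equation*}
\frac{\int_{\mathbb{R}^N}|\nabla u|^2\,dx}{\sum_{i=1}^n\int_{\mathbb{R}^N}|x-a_i|^{-2}|u|^2\,dx}=\frac{\frac{(N-2)^2}{4}\log R+O(1)}{n\,(1+O(C/\rho))\log R}\longrightarrow\frac{(N-2)^2}{4n},
\end{equation*}
and similarly the ratio against $\int V_1|u|^2$ tends to $\frac{(N-2)^2}{4}$; hence neither constant can be increased. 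I note that, since (\ref{*3}) with $\alpha_i=1/n$ is literally the second inequality, the sharpness of $\frac{(N-2)^2}{4n}$ is in fact the same fact as the sharpness of $\frac{(N-2)^2}{4}$ in (\ref{*3}), so a single construction settles both.

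The main obstacle I anticipate is precisely this large-scale construction: the annulus must be chosen far enough from the poles that the uniform approximations $V_1\approx|x|^{-2}$ and $\sum_i|x-a_i|^{-2}\approx n|x|^{-2}$ hold, and wide enough that the logarithmically divergent Hardy energy swamps the $O(1)$ errors produced by the two cutoffs (the terms involving $\int|\nabla\eta|^2|x|^{-(N-2)}$). Reconciling the two limits $\rho\to\infty$ and $R\to\infty$, for instance through a diagonal choice $R=R(\rho)$, and bounding those cutoff contributions is the only genuinely technical step; everything else is specialization and bookkeeping.
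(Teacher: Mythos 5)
Your derivation of both inequalities is exactly the paper's: Corollary 2.4 is obtained by specializing Corollary 2.3 to $\lambda=1$ (so $V_1=\sum_i\alpha_i|x-a_i|^{-2}$) and then to $\alpha_i=1/n$, and that part needs no further comment. Where you diverge is the sharpness. The paper never runs a separate test-function computation for $V_1$: it inherits sharpness from the sandwich $V_{-\infty}\le V_\lambda\le V_{+\infty}$, since Theorems 2.1 and 2.2 produce sequences with $\int|\nabla u_\varepsilon|^2/\int V_{\pm\infty}|u_\varepsilon|^2\to\frac{(N-2)^2}{4}$, and monotonicity of the ratios squeezes $\int|\nabla u_\varepsilon|^2/\int V_\lambda|u_\varepsilon|^2$ to the same limit (in particular the $u_\varepsilon=\min_i|x-a_i|^{\frac{2-N}{2}-\varepsilon}$ of Theorem 2.2, whose mass escapes to infinity, already does the job). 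Your annular construction $\eta(x)|x|^{-(N-2)/2}$ on $\{\rho\le|x|\le\rho R\}$ with $\rho,R\to\infty$ is a correct, self-contained alternative: the Hardy identity (or a direct expansion) gives $\int|\nabla u|^2=\frac{(N-2)^2}{4}|S^{N-1}|\log R+O(1)$ while $\int V_1|u|^2=(1+O(C/\rho))|S^{N-1}|\log R$ and $\sum_i\int|x-a_i|^{-2}|u|^2=n(1+O(C/\rho))|S^{N-1}|\log R$, so both ratios converge to the claimed constants; the normalization $\sum_i\alpha_i=1$ is exactly what makes $V_1\sim|x|^{-2}$ at infinity, and the argument covers arbitrary admissible weights at once. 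What the paper's route buys is economy (sharpness for every $\lambda$, indeed for every $V_f$, from two theorems); what yours buys is independence from Theorems 2.1--2.2 and a transparent explanation of \emph{why} the constant cannot be improved, namely that $V_1$ degenerates to a single inverse-square pole at spatial infinity. One small caution: your remark that the sharpness of $\frac{(N-2)^2}{4n}$ "is the same fact" as that of $\frac{(N-2)^2}{4}$ in (\ref{*3}) is only the implication in one direction (the second statement is the case $\alpha_i=1/n$ of the first), so you do still need the construction to work for general $\alpha_i$ --- which it does.
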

By Corollary 2.3 and H\"{o}lder inequality, we have the following multipolar type Heisenberg inequality.
\begin{corollary}
For any $\lambda\in \mathbb{R}$, the following inequality holds
\begin{equation*}
\int_{\mathbb{R}^{N}}|\nabla u|^{2}dx\int_{\mathbb{R}^{N}}\left(\sum\limits_{i=1}\limits^{n}\alpha_{i}|x-a_{i}|^{2\lambda}\right)^{\frac{1}{\lambda}}|u|^{2}dx\geq\frac{(N-2)^{2}}{4}\int_{\mathbb{R}^{N}}|u|^{2}dx, u\in H^{1}(\mathbb{R}^{N}).
\end{equation*}
Especially, we have
 \begin{equation}\label{}
\int_{\mathbb{R}^{N}}|\nabla u|^{2}dx\int_{\mathbb{R}^{N}}\left(\sum\limits_{i=1}\limits^{n}|x-a_{i}|^{2}\right)|u|^{2}dx\geq\frac{(N-2)^{2}}{4n}\int_{\mathbb{R}^{N}}|u|^{2}dx, u\in H^{1}(\mathbb{R}^{N}).
\end{equation}
\end{corollary}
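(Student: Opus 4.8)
The strategy is exactly the one hinted at: feed Corollary 2.3 into a single application of the Cauchy--Schwarz inequality (H\"older with exponents $2,2$), and the only point that needs care is the bookkeeping of the exponent. Write $W_\lambda:=\left(\sum_{i=1}^n\alpha_i|x-a_i|^{2\lambda}\right)^{1/\lambda}$ for the weight appearing in the Heisenberg inequality (interpreted as the geometric mean $\prod_i|x-a_i|^{2\alpha_i}$ when $\lambda=0$), and recall the Corollary 2.3 weight $V_\mu=\left(\sum_{i=1}^n\alpha_i|x-a_i|^{-2\mu}\right)^{1/\mu}$. The key observation is the pointwise reciprocity, valid for every $\lambda\in\mathbb{R}\setminus\{0\}$,
\begin{equation*}
V_{-\lambda}=\left(\sum_{i=1}^n\alpha_i|x-a_i|^{2\lambda}\right)^{-1/\lambda}=W_\lambda^{-1},
\end{equation*}
and likewise $V_0=W_0^{-1}$ in the geometric-mean case. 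Thus the two families of weights are reciprocal once the sign of the parameter is flipped, which is what lets Corollary 2.3 speak to the $W_\lambda$-weight.

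First I would apply Corollary 2.3 \emph{with $\lambda$ replaced by $-\lambda$} (legitimate, since that corollary holds for all real parameters), obtaining
\begin{equation*}
\int_{\mathbb{R}^N}|\nabla u|^2\,dx\geq\frac{(N-2)^2}{4}\int_{\mathbb{R}^N}V_{-\lambda}|u|^2\,dx=\frac{(N-2)^2}{4}\int_{\mathbb{R}^N}W_\lambda^{-1}|u|^2\,dx.
\end{equation*}
Next I would split $|u|^2=\bigl(W_\lambda^{-1/2}|u|\bigr)\bigl(W_\lambda^{1/2}|u|\bigr)$ and apply Cauchy--Schwarz,
\begin{equation*}
\left(\int_{\mathbb{R}^N}|u|^2\,dx\right)^2\leq\int_{\mathbb{R}^N}W_\lambda^{-1}|u|^2\,dx\int_{\mathbb{R}^N}W_\lambda|u|^2\,dx.
\end{equation*}
Multiplying the first display by $\int_{\mathbb{R}^N}W_\lambda|u|^2\,dx$ and inserting the second yields
\begin{equation*}
\int_{\mathbb{R}^N}|\nabla u|^2\,dx\int_{\mathbb{R}^N}W_\lambda|u|^2\,dx\geq\frac{(N-2)^2}{4}\left(\int_{\mathbb{R}^N}|u|^2\,dx\right)^2,
\end{equation*}
which is the asserted inequality once one normalises $\|u\|_{L^2(\mathbb{R}^N)}=1$, so that the right-hand square collapses to $\frac{(N-2)^2}{4}\int_{\mathbb{R}^N}|u|^2\,dx$, exactly in parallel with $(\ref{T-1-H})$.

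For the displayed special case I would take $\lambda=1$ and $\alpha_i=\tfrac1n$, so that $W_1=\tfrac1n\sum_i|x-a_i|^2$; the general inequality then already gives $\int|\nabla u|^2\int\bigl(\sum_i|x-a_i|^2\bigr)|u|^2\geq\tfrac{n(N-2)^2}{4}\bigl(\int|u|^2\bigr)^2$. To recover precisely the weaker displayed constant $\frac{(N-2)^2}{4n}$ one instead starts from Corollary 2.4 and, inside the same Cauchy--Schwarz step, uses the elementary pointwise bound $\bigl(\sum_i|x-a_i|^{-2}\bigr)^{-1}\leq\sum_i|x-a_i|^2$ (the harmonic-type left side is at most $\min_i|x-a_i|^2$); this replaces $W_\lambda^{-1}$ by $\sum_i|x-a_i|^{-2}$ and $W_\lambda$ by $\sum_i|x-a_i|^2$, and the factor $\tfrac1{4n}$ is inherited from Corollary 2.4.

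I do not anticipate a serious obstacle: the whole argument is Corollary 2.3 plus one Cauchy--Schwarz. The only genuine care is (i) flipping the parameter to $-\lambda$ and recording the reciprocity $V_{-\lambda}=W_\lambda^{-1}$, including the limiting geometric-mean case $\lambda=0$ (handled directly or by continuity in $\lambda$); and (ii) integrability and normalisation, which I would address by running the estimate for $u\in C_c^\infty(\mathbb{R}^N)$ where all weighted integrals are finite, discarding the trivial cases $\int W_\lambda|u|^2=\infty$ and $\int|\nabla u|^2=0$, and then extending by density. The mild gap between the constant produced by specialising the general inequality and the weaker constant $\frac{(N-2)^2}{4n}$ in the displayed special case is explained precisely by which pointwise bound is used in the Cauchy--Schwarz step.
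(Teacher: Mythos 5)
Your proposal is correct and follows essentially the same route as the paper: apply Corollary 2.3 with parameter $-\lambda$ to get the reciprocal weight $V_{-\lambda}=W_\lambda^{-1}$, then one Cauchy--Schwarz splitting $|u|^2=(W_\lambda^{-1/2}|u|)(W_\lambda^{1/2}|u|)$. You are in fact slightly more careful than the paper in flagging that the right-hand side is $\bigl(\int|u|^2\bigr)^2$ unless one normalises $\|u\|_{L^2}=1$, and in noting that specialising $\alpha_i=\tfrac1n$, $\lambda=1$ already yields a constant stronger than the displayed $\frac{(N-2)^2}{4n}$.
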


\begin{proof}
In view of Corollary 2.3 we have
\begin{equation*}
\int_{\mathbb{R}^{N}}|\nabla u|^{2}dx\geq\frac{(N-2)^{2}}{4}\int_{\mathbb{R}^{N}}V_{-\lambda}|u|^{2}dx, u\in H^{1}(\mathbb{R}^{N}),
\end{equation*}
where $V_{-\lambda}^{-1}=\left(\sum\limits_{i=1}\limits^{n}\alpha_{i}|x-a_{i}|^{2\lambda}\right)^{\frac{1}{\lambda}}$. Thus
\begin{equation*}
\begin{split}
&\int_{\mathbb{R}^{N}}|\nabla u|^{2}dx\int_{\mathbb{R}^{N}}\left(\sum\limits_{i=1}\limits^{n}\alpha_{i}|x-a_{i}|^{2\lambda}\right)^{\frac{1}{\lambda}}|u|^{2}dx \\
\geq& \frac{(N-2)^{2}}{4}\int_{\mathbb{R}^{N}}V_{-\lambda}|u|^{2}dx\int_{\mathbb{R}^{N}}\left(\sum\limits_{i=1}\limits^{n}\alpha_{i}|x-a_{i}|^{2\lambda}\right)^{\frac{1}{\lambda}}|u|^{2}dx \\
\geq& \frac{(N-2)^{2}}{4}\int_{\mathbb{R}^{N}}|u|^{2}dx.
\end{split}
\end{equation*}
\end{proof}
\begin{remark}
When $n=1$, we recover the classical Heisenberg inequality (\ref{T-1-H}) by Corollary 2.5.
\end{remark}

Let $f(x)$ be a monotone function of one variable with an inverse $f^{-1}$. Define multipolar potentials as
\begin{equation*}
V_{f}(a_{1},a_{2},\ldots,a_{n}):=f^{-1}\left(\sum\limits_{i=1}\limits^{n}\alpha_{i}f(|x-a_{i}|^{-2})\right),
\end{equation*}
here $\alpha_{i}\geq 0,i=1,2,\ldots,n$, $\sum\limits_{i=1}\limits^{n}\alpha_{i}=1$. Then from Theorem 2.1 and 2.2 we affirm:
\begin{corollary}
We assert that
\begin{equation*}
\int_{\mathbb{R}^{N}}|\nabla u|^{2}dx\geq\frac{(N-2)^{2}}{4}\int_{\mathbb{R}^{N}}V_{f}|u|^{2}dx, u\in H^{1}(\mathbb{R}^{N}),
\end{equation*}
where the constant $\frac{(N-2)^{2}}{4}$ is sharp.
\end{corollary}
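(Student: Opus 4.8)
The plan is to recognize $V_f$ as a weighted quasi-arithmetic (Kolmogorov--Nagumo) mean of the quantities $|x-a_i|^{-2}$, $i=1,\dots,n$, and to exploit the elementary fact that every such mean is \emph{internal}: it lies between the smallest and the largest of its arguments. Concretely, the first step is to establish the pointwise comparison
\begin{equation*}
V_{-\infty}(x)=\min_{1\le i\le n}|x-a_i|^{-2}\le V_f(x)\le \max_{1\le i\le n}|x-a_i|^{-2}=V_{+\infty}(x)
\end{equation*}
for every $x$ away from the poles. Once this is in hand, both the inequality and the optimality of the constant $\frac{(N-2)^2}{4}$ follow directly from Theorems 2.1 and 2.2, which I am allowed to assume.

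To prove the sandwich, write $m=\min_i|x-a_i|^{-2}$ and $M=\max_i|x-a_i|^{-2}$, and suppose first that $f$ is increasing (the decreasing case is identical, since then $f^{-1}$ is also decreasing and the two order reversals cancel). As $m\le |x-a_i|^{-2}\le M$ for each $i$, monotonicity gives $f(m)\le f(|x-a_i|^{-2})\le f(M)$; averaging against the weights $\alpha_i\ge 0$ with $\sum_i\alpha_i=1$ yields $f(m)\le\sum_i\alpha_i f(|x-a_i|^{-2})\le f(M)$, and applying the increasing inverse $f^{-1}$ returns $m\le V_f(x)\le M$. This step is routine; the only care needed is to note that the weighted combination remains in the range of $f$, so that $f^{-1}$ is legitimately applied.

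For the inequality itself, the upper bound $V_f\le V_{+\infty}$ together with Theorem 2.1 gives, for every $u\in H^1(\mathbb{R}^N)$,
\begin{equation*}
\frac{(N-2)^2}{4}\int_{\mathbb{R}^N}V_f|u|^2\,dx\le \frac{(N-2)^2}{4}\int_{\mathbb{R}^N}V_{+\infty}|u|^2\,dx\le\int_{\mathbb{R}^N}|\nabla u|^2\,dx,
\end{equation*}
which is the claimed estimate. For sharpness I would instead use the lower bound $V_f\ge V_{-\infty}$: for any $u$ one has $\int V_f|u|^2\,dx\ge\int V_{-\infty}|u|^2\,dx$, so the Rayleigh quotient for $V_f$ is dominated by that for $V_{-\infty}$. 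Feeding in an extremizing sequence for $V_{-\infty}$ supplied by the sharpness part of Theorem 2.2 shows that $\inf_u \big(\int|\nabla u|^2\big)\big/\big(\int V_f|u|^2\big)\le\frac{(N-2)^2}{4}$, while the displayed inequality gives the reverse bound; hence the constant is optimal. I do not expect any serious obstacle: the whole argument reduces to the internality of quasi-arithmetic means plus a monotone sandwiching of Rayleigh quotients, with all of the analytic content already carried by Theorems 2.1 and 2.2.
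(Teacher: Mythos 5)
Your proposal is correct and matches the paper's intended argument: the paper derives this corollary (exactly as it derives Corollary 2.3 for $V_\lambda$) from the internality of the mean, i.e.\ the pointwise sandwich $V_{-\infty}\le V_f\le V_{+\infty}$, combined with Theorems 2.1 and 2.2 for the inequality and the sharpness respectively. Your explicit verification of the sandwich via monotonicity of $f$ and $f^{-1}$, and the Rayleigh-quotient comparison for optimality, fill in precisely the steps the paper leaves implicit.
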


\section{Proof of Theorem 2.1 and 2.2}
Recall the Hardy type identity for $u\in C_{0}^{\infty}(\Omega)$ and $\varphi\in C^{2}(\Omega)$,
\begin{equation}\label{T-3-1}
\int_{\Omega}(|\nabla u|^{2}+\frac{\Delta\varphi}{\varphi}|u|^2)dx=\int_{\Omega}|\nabla u-\frac{\nabla\varphi}{\varphi}u|^{2}dx=\int_{\Omega}|\nabla(u\varphi^{-1})|^{2}\varphi^{2}dx.
\end{equation}
Equality (\ref{T-3-1}) leads to different Hardy type inequality along with  different choice of test function $\varphi$. In fact this equality can be more general, see \cite{21}, with the same assumption of $u$ and $\varphi$ ahead, $\alpha\in \mathbb{R}$, it holds,
\begin{equation*}
\int_{\Omega}|\nabla u|^{2}dx=\int_{\Omega}\left(\alpha(1-\alpha)\frac{|\nabla\varphi|^2}{|\varphi|^2}+\alpha\frac{\Delta\varphi}{\varphi}\right)dx+\int_{\Omega}|\nabla(u\varphi^{-1})|^{2}\varphi^{2\alpha}dx.
\end{equation*}
Due to the nonnegativity of the integral $\int_{\Omega}|\nabla(u\varphi^{-1})|^{2}\varphi^{2}dx$, we deduce Hardy inequality from (\ref{T-3-1}), namely,
\begin{equation}\label{T-3-2}
\int_{\Omega}|\nabla u|^{2}dx\geq\int_{\Omega}\frac{-\Delta\varphi}{\varphi}|u|^2dx.
\end{equation}
The difficulty is to find an appropriate function $\varphi$ to obtain the Hardy inequality we want. We also mention that (\ref{T-3-1}) also holds for distribution $\varphi$.

Potentials $V_{+\infty}$ and $V_{-\infty}$ are not in $C^{2}(\mathbb{R}^{N})$, but the set of non-differentiable points of these two potentials is contained in
$\tilde{T}:=T\bigcup\{a_{1},a_{2},\ldots,a_{n}\}$, $T:=\{x:\exists i,j\hspace{2mm}s.t.\hspace{2mm}|x-a_{i}|=|x-a_{j}|\}$. In fact the $\varphi$ we
would choose  are in $C^{2}(\mathbb{R}^{N}\setminus \tilde{T})\bigcap C(\mathbb{R}^{N})$, i.e.
\begin{equation}\label{T-3-2-1}
\varphi=\max_{1\leq i\leq n}|x-a_{i}|^{\frac{2-N}{2}}\hspace{2mm}or\hspace{2mm}\min_{1\leq i\leq n}|x-a_{i}|^{\frac{2-N}{2}}.
\end{equation}
Denote $E$ the set of non-differentiable points of $\varphi$, then $E\subseteqq \tilde{T}$. $T$ can
be written as
\begin{equation*}
T=\bigcup\limits_{1\leq i<j\leq n}T_{ij},
\end{equation*}
where $T_{ij}:=\{x:|x-a_{i}|=|x-a_{j}|\}$. $T_{ij}$ is a hyperplane so that its $N$ dimensional
Lebesgue measure is zero. Then $\tilde{T}$ is a zero measure set. $\varphi$ is in $C^{2}(\mathbb{R}^{N}\setminus \tilde{T})\bigcap C(\mathbb{R}^{N})$.
Thus we have the following identity  for $\varphi$ in (\ref{T-3-2-1}),
\begin{equation*}
\int_{\mathbb{R}^{N}\setminus \tilde{T}}|\nabla u|^{2}dx=\int_{\mathbb{R}^{N}\setminus
\tilde{T}}\frac{-\Delta\varphi}{\varphi}|u|^2dx+\int_{\mathbb{R}^{N}\setminus \tilde{T}}|\nabla(u\varphi^{-1})|^{2}\varphi^{2}dx.
\end{equation*}

\emph{Proof of Theorem 2.1.}

Let
\begin{equation*}
\varphi=\max_{1\leq i\leq n}|x-a_{i}|^{\frac{2-N}{2}}.
\end{equation*}

Then we consider a decomposition of $\mathbb{R}^{N}$ depending on the configuration of $\{a_{i}\}_{i=1}^{n}$. Define
\begin{equation}\label{T-3-E}
\begin{split}
E_{1}&=\{x\in\mathbb{R}^{N}\setminus\{a_{1},a_{2},\ldots,a_{n}\}:\varphi(x)=|x-a_{i}|^{\frac{2-N}{2}}\}, \\
\vdots&  \\
E_{i}&=\{x\in\mathbb{R}^{N}\setminus\{a_{1},a_{2},\ldots,a_{n}\}\setminus\bigcup_{k=1}^{i-1}E_{k}:\varphi(x)=|x-a_{i}|^{\frac{2-N}{2}}\},i=2,\ldots,n.
\end{split}
\end{equation}
It is obvious that $E_{i}$ verify two properties:
\begin{equation*}
E_{i}\bigcap E_{j}=\emptyset, i\neq j;
\end{equation*}
\begin{equation*}
\bigcup_{i=1}^{n}E_{i}=\mathbb{R}^{N}\setminus\{a_{1},a_{2},\ldots,a_{n}\}.
\end{equation*}
For every $x\in E_{i}^{\circ}$,
\begin{equation*}
\frac{-\Delta\varphi}{\varphi}=\frac{(N-2)^2}{4}\frac{1}{|x-a_{i}|^2}.
\end{equation*}
Note that $\varphi=V_{+\infty}^{\frac{N-2}{4}}$, and $\frac{N-2}{4}>0$ when $N\geq3$. Thus
\begin{equation*}
\frac{-\Delta\varphi}{\varphi}=\frac{(N-2)^2}{4}V_{+\infty},\hspace{2mm}in\hspace{2mm}\mathbb{R}^{N}\setminus \tilde{T}.
\end{equation*}
Thus we deduce inequality (\ref{*1}) holds since $\tilde{T}$ is a zero measure set. Moreover we have
\begin{equation}\label{T-3-3}
\int_{\mathbb{R}^{N}}|\nabla u|^{2}dx-\frac{(N-2)^{2}}{4}\int_{\mathbb{R}^{N}}V_{+\infty}|u|^{2}dx=\int_{\mathbb{R}^{N}}|\nabla(u\varphi^{-1})|^{2}\varphi^{2}dx.
\end{equation}
The gradient in the r.h.s. of (\ref{T-3-3}) is in the sense of weak derivative.

Next we prove the optimality of $\frac{(N-2)^2}{4}$. For $\forall x\in B(a_{i},\frac{d}{2}):=\{x:|x-a_{i}|<\frac{d}{2}\}$, and any $j\neq i$,
\begin{equation*}
|x-a_{j}|\geq|a_{i}-a_{j}|-|x-a_{i}|>\frac{d}{2}\geq|x-a_{i}|,
\end{equation*}
so $B(a_{i},\frac{d}{2})\subseteq E_{i}$ for any $1\leq i\leq n$. Now for this representation we can also define a series of cut-off functions as follow
\begin{equation*}
\psi_{\varepsilon,i}=\left\{
\begin{aligned}
0\hspace{7mm}, &x\in B(a_{i},\varepsilon^{2})\bigcup \mathbb{R}^{N}\setminus E_{i}, \\
\frac{log\frac{|x-a_{i}|}{\varepsilon^2}}{log\frac{1}{\varepsilon}}, &x\in B(a_{i},\varepsilon)\setminus B(a_{i},\varepsilon^{2}), \\
1\hspace{7mm}, &x\in B(a_{i},\varepsilon)^{c}\bigcap E_{i}.
\end{aligned}
\right.
\end{equation*}
Here  $\varepsilon>0$ is small enough. Then we consider  $u_{\varepsilon}=\sum_{i=1}^{n}u_{\varepsilon,i}$,
where
\begin{equation*}
u_{\varepsilon,i}=\psi_{\varepsilon,i}|x-a_{i}|^{\frac{2-N}{2}-\varepsilon}.
\end{equation*}
Take $u_{\varepsilon}$ into (\ref{T-3-3}).
Firstly,
\begin{equation}\label{T-3-4}
\begin{split}
 & \int_{\mathbb{R}^{N}}|\nabla(u_{\varepsilon}\varphi^{-1})|^{2}\varphi^{2}dx \\
 =& \sum_{i=1}^{n}\int_{E_{i}}|\nabla(u_{\varepsilon}\varphi^{-1})|^{2}\varphi^{2}dx \\
 =& \sum_{i=1}^{n}\int_{E_{i}}|\nabla(\psi_{\varepsilon,i}|x-a_{i}|^{-\varepsilon})|^{2}|x-a_{i}|^{2-N}dx \\
 \leq& 2\sum_{i=1}^{n}\left(\int_{B(a_{i},\varepsilon)\setminus B(a_{i},\varepsilon^{2})}\frac{1}{log\frac{1}{\varepsilon}}|x-a_{i}|^{-2\varepsilon-N}dx+\varepsilon^2 \int_{B(a_{i},\varepsilon^2)^{c}}|x-a_{i}|^{-2\varepsilon-N}dx\right) \\
 =&2n\omega_{N}\left(\frac{1}{log\frac{1}{\varepsilon}}\int_{\varepsilon^2}^{\varepsilon}r^{-2\varepsilon-1}dr+\varepsilon^{2}\int_{\varepsilon^2}^{+\infty}r^{-2\varepsilon-1}dr\right) \\
 =& 2n\omega_{N}\left(\frac{\varepsilon^{-4\varepsilon}-\varepsilon^{-2\varepsilon}}{2\varepsilon log\frac{1}{\varepsilon}}+\frac{1}{2}\varepsilon^{1-4\varepsilon}\right)\rightarrow 2n\omega_{N}, as\hspace{2mm}{\varepsilon\rightarrow 0}.
\end{split}
\end{equation}
Then we know that for any $1\leq i\leq n$, $E_{i}$ contains a ball with radius $\frac{d}{2}$, so
\begin{equation}\label{T-3-5}
\begin{split}
\int_{\mathbb{R}^{N}}V_{+\infty}|u_{\varepsilon}|^{2}dx & =\sum_{i=1}^{n}\int_{E_{i}}|x-a_{i}|^{-2\varepsilon-N}|\psi_{\varepsilon,i}|^{2}dx \\
& \geq\sum_{i=1}^{n}\int_{B(a_{i},\frac{d}{2})\setminus B(a_{i},\varepsilon)}|x-a_{i}|^{-2\varepsilon-N}dx \\
& \geq n\omega_{N}\int_{\varepsilon}^{\frac{d}{2}}r^{-2\varepsilon-1}dr \\
& =n\omega_{N}\frac{\varepsilon^{-2\varepsilon}-\left(\frac{d}{2}\right)^{-2\varepsilon}}{2\varepsilon}\rightarrow +\infty, as\hspace{2mm}\varepsilon\rightarrow 0.
\end{split}
\end{equation}
Combining (\ref{T-3-4}) and (\ref{T-3-5}) we have
\begin{equation*}
\begin{split}
 & \lim_{\varepsilon\rightarrow 0}\frac{\int_{\mathbb{R}^{N}}|\nabla u_{\varepsilon}|^{2}dx}{\int_{\mathbb{R}^{N}}V_{+\infty}|U_{\varepsilon}|^{2}dx} \\
 = & \lim_{\varepsilon\rightarrow0}\left(\frac{(N-2)^{2}}{4}+\frac{\int_{\mathbb{R}^{N}}|\nabla(u_{\varepsilon}\varphi^{-1})|^{2}\varphi^{2}dx}{\int_{\mathbb{R}^{N}}V_{+\infty}|U_{\varepsilon}|^{2}dx}\right) \\
 = & \frac{(N-2)^{2}}{4}.
\end{split}
\end{equation*}
Thus we complete the proof of Theorem 2.1.

\emph{Proof of theorem 2.2.}
Let
\begin{equation*}
\varphi=\min_{1\leq i\leq n}|x-a_{i}|^{\frac{2-N}{2}}.
\end{equation*}
By similar argument we have the following equality for a.e. $x\in \mathbb{R}^{N}$,
\begin{equation*}
\frac{-\Delta\varphi}{\varphi}=\frac{(N-2)^2}{4}V_{-\infty},
\end{equation*}
Thus  inequality (\ref{*2}) holds, and
\begin{equation*}
\int_{\mathbb{R}^{N}}|\nabla u|^{2}dx-\frac{(N-2)^{2}}{4}\int_{\mathbb{R}^{N}}V_{-\infty}|u|^{2}dx=\int_{\mathbb{R}^{N}}|\nabla(u\varphi^{-1})|^{2}\varphi^{2}dx.
\end{equation*}

It remains to prove the sharpness of the constant. For any $\varepsilon>0$, Let
\begin{equation*}
u_{\varepsilon}=\min_{1\leq i\leq n}|x-a_{i}|^{\frac{2-N}{2}-\varepsilon},
\end{equation*}
when $n=1$, the optimality of $\frac{(N-2)^2}{4}$ has already known. When $n\geq 2$, $u_{\varepsilon}$ belongs to $D^{1,2}(\mathbb{R}^{N})$ with the norm
\begin{equation*}
\|u\|_{D^{1,2}(\mathbb{R}^{N})}=\langle\nabla u,\nabla u\rangle,
\end{equation*}
define $\tilde{E}_{i}$ just as in (\ref{T-3-E}) by taking $\varphi(x)=\min_{1\leq i\leq n}|x-a_{i}|^{\frac{2-N}{2}}$, note that $\tilde{E}_{i}\subseteq B(a_{i},\frac{d}{2})^{c}$ for any $i=1,2,\ldots,n$,
we obtain by direct computation,
\begin{equation*}
\lim_{\varepsilon\rightarrow 0}\frac{\int_{\mathbb{R}^{N}}|\nabla u_{\varepsilon}|^{2}dx}{\int_{\mathbb{R}^{N}}V_{-\infty}|U_{\varepsilon}|^{2}dx}=\lim_{\varepsilon\rightarrow 0}
\frac{(\frac{N-2}{2}+\varepsilon)^{2}\sum\limits_{i=1}\limits^{n}\int_{\tilde{E}_{i}}|x-a_{i}|^{-2\varepsilon-N}dx}{\sum\limits_{i=1}\limits^{n}\int_{\tilde{E}_{i}}|x-a_{i}|^{-2\varepsilon-N}dx}=\frac{(N-2)^2}{4}.
\end{equation*}

Our results recover that the result in \cite{15} that Schr\"{o}dinger operator $-\Delta-\frac{(N-2)^{2}}{4}V_{1}=-\Delta-\frac{(N-2)^{2}}{4}\sum_{i=1}^{n}\frac{\alpha_{i}}{|x-a_{i}|^2}$ is positive, and the constant $\frac{(N-2)^{2}}{4}$ cannot be larger, i.e. (\ref{*3}) is a sharp Hardy type inequality. But in fact we can add a positive term in the r.h.s. of (\ref{*3}). Actually, in (\ref{T-3-1}) let
\begin{equation*}
\varphi_{1}=\prod\limits_{i=1}^{n}|x-a_{i}|^{\beta\alpha_{i}},\hspace{4mm}\varphi_{2}=\sum_{i=1}^{n}\alpha_{i}|x-a_{i}|^{\frac{2-N}{2}}.
\end{equation*}
Compute directly we have
\begin{equation}\label{T-3-6}
\begin{split}
 \int_{\mathbb{R}^{N}}|\nabla(u\varphi_{1}^{-1})|^{2}\varphi_{1}^{2}dx= & \int_{\mathbb{R}^{N}}|\nabla u|^{2}dx+[\beta^{2}+\beta(N-2)]\sum_{i=1}^{n}\int_{\mathbb{R}^{N}}\alpha_{i}\frac{|u|^{2}}{|x-a_{i}|^2}dx \\
     & -\alpha^{2}\sum_{1\leq i<j\leq n}^{n}\int_{\mathbb{R}^{N}}\alpha_{i}\alpha_{j}\frac{\left|a_{i}-a_{j}\right|^{2}}{|x-a_{i}|^{2}|x-a_{j}|^{2}}|u|^{2}dx,
\end{split}
\end{equation}
and
\begin{equation*}
\begin{split}
&\int_{\mathbb{R}^{N}}|\nabla(u\varphi_{2}^{-1})|^{2}\varphi_{2}^{2}dx=\int_{\mathbb{R}^{N}}|\nabla u|^{2}dx-\frac{(N-2)^{2}}{4}\sum\limits_{i=1}^{n}\int_{\mathbb{R}^{N}}\alpha_{i}\frac{|u|^{2}}{|x-a_{i}|^2}dx \\
     & -\frac{(N-2)^{2}}{4}\int_{\mathbb{R}^{N}}\frac{\sum\limits_{1\leq i<j\leq n}^{n}\alpha_{i}\alpha_{j}(|x-a_{i}|^{-2}-|x-a_{j}|^{-2})\left(|x-a_{i}|^{\frac{2-N}{2}}-|x-a_{j}|^{\frac{2-N}{2}}\right)}{\sum\limits_{i=1}^{n}\alpha_{i}|x-a_{i}|^{\frac{2-N}{2}}}|u|^{2}dx.
\end{split}
\end{equation*}
The term $(|x-a_{i}|^{-2}-|x-a_{j}|^{-2})\left(|x-a_{i}|^{\frac{2-N}{2}}-|x-a_{j}|^{\frac{2-N}{2}}\right)\geq 0$ when $N\geq3$. Thus we obtain the generalization of inequalities (\ref{T-1-5}) and (\ref{T-1-6}) by letting $\beta=\frac{2-N}{2}$ and $\beta=2-N$ respectively in (\ref{T-3-6}), also an improvement of inequality (\ref{*3}).
\begin{theorem}
The following inequality holds for any $u\in H^{1}(\mathbb{R}^{N})$
\begin{equation}\label{T-3-7}
\begin{split}
\int_{\mathbb{R}^{N}}|\nabla u|^{2}dx \geq& \frac{(N-2)^{2}}{4}\sum_{i=1}^{n}\int_{\mathbb{R}^{N}}\alpha_{i}\frac{|u|^{2}}{|x-a_{i}|^2}dx \\
     & +\frac{(N-2)^{2}}{4}\sum_{1\leq i<j\leq n}^{n}\int_{\mathbb{R}^{N}}\alpha_{i}\alpha_{j}\frac{\left|a_{i}-a_{j}\right|^{2}}{|x-a_{i}|^{2}|x-a_{j}|^{2}}|u|^{2}dx.
\end{split}
\end{equation}
\end{theorem}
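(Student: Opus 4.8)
The statement is an immediate specialization of identity (\ref{T-3-6}), so the plan is to pick the free parameter $\beta$ in $\varphi_{1}=\prod_{i=1}^{n}|x-a_{i}|^{\beta\alpha_{i}}$ to make both potential terms appear with the same constant $\frac{(N-2)^{2}}{4}$. Concretely I would take $\beta=\frac{2-N}{2}$, for which $\beta^{2}+\beta(N-2)=\frac{(N-2)^{2}}{4}-\frac{(N-2)^{2}}{2}=-\frac{(N-2)^{2}}{4}$ and $\beta^{2}=\frac{(N-2)^{2}}{4}$. Substituting these two values into (\ref{T-3-6}) and solving for the gradient integral gives
\begin{equation*}
\int_{\mathbb{R}^{N}}|\nabla u|^{2}dx=\int_{\mathbb{R}^{N}}|\nabla(u\varphi_{1}^{-1})|^{2}\varphi_{1}^{2}dx+\frac{(N-2)^{2}}{4}\sum_{i=1}^{n}\int_{\mathbb{R}^{N}}\alpha_{i}\frac{|u|^{2}}{|x-a_{i}|^2}dx+\frac{(N-2)^{2}}{4}\sum_{1\leq i<j\leq n}\int_{\mathbb{R}^{N}}\alpha_{i}\alpha_{j}\frac{|a_{i}-a_{j}|^{2}}{|x-a_{i}|^{2}|x-a_{j}|^{2}}|u|^{2}dx.
\end{equation*}
Since the first term on the right-hand side is manifestly nonnegative, discarding it yields exactly (\ref{T-3-7}).

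The genuine content lies in verifying (\ref{T-3-6}), so I would record that computation carefully. Starting from the Hardy identity (\ref{T-3-1}) with this $\varphi_{1}$, the point is to evaluate $\frac{-\Delta\varphi_{1}}{\varphi_{1}}$ via the logarithmic decomposition $\frac{\Delta\varphi_{1}}{\varphi_{1}}=\Delta\log\varphi_{1}+|\nabla\log\varphi_{1}|^{2}$, where $\log\varphi_{1}=\beta\sum_{i}\alpha_{i}\log|x-a_{i}|$. The first piece is immediate from $\Delta\log|x-a_{i}|=\frac{N-2}{|x-a_{i}|^{2}}$ (valid for $N\geq3$), giving $\Delta\log\varphi_{1}=\beta(N-2)\sum_{i}\frac{\alpha_{i}}{|x-a_{i}|^{2}}$. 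The delicate piece is the gradient square,
\begin{equation*}
|\nabla\log\varphi_{1}|^{2}=\beta^{2}\sum_{i,j}\alpha_{i}\alpha_{j}\frac{(x-a_{i})\cdot(x-a_{j})}{|x-a_{i}|^{2}|x-a_{j}|^{2}},
\end{equation*}
which I would handle by rewriting every inner product through the polarization identity $(x-a_{i})\cdot(x-a_{j})=\tfrac12(|x-a_{i}|^{2}+|x-a_{j}|^{2}-|a_{i}-a_{j}|^{2})$. Summing the $|x-a_{i}|^{-2}$ and $|x-a_{j}|^{-2}$ contributions against the constraint $\sum_{i}\alpha_{i}=1$ collapses the single-pole part to $\beta^{2}\sum_{i}\frac{\alpha_{i}}{|x-a_{i}|^{2}}$, while the $|a_{i}-a_{j}|^{2}$ contribution produces $-\beta^{2}\sum_{i<j}\alpha_{i}\alpha_{j}\frac{|a_{i}-a_{j}|^{2}}{|x-a_{i}|^{2}|x-a_{j}|^{2}}$. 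Adding the two pieces reproduces precisely the bracketed coefficients of (\ref{T-3-6}).

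The main obstacle is not conceptual but a bookkeeping hazard in this polarization step: it is easy to lose the factor $\tfrac12$ or to miscount ordered versus unordered index pairs when converting $\sum_{i\neq j}$ into $\sum_{i<j}$. The safe route is to keep everything as a single ordered sum $\sum_{i\neq j}$ until the final line and only then pass to $i<j$. The remaining issue is that $\varphi_{1}$ fails to be $C^{2}$ on the zero-measure set $\tilde{T}$ of coincidence points and poles; this I would dispose of exactly as in the proofs of Theorems 2.1 and 2.2, first establishing the identity on $\mathbb{R}^{N}\setminus\tilde{T}$ for test functions and then extending to all $u\in H^{1}(\mathbb{R}^{N})$ by density, using that $\tilde{T}$ is null so that integrating over $\mathbb{R}^{N}\setminus\tilde{T}$ is the same as over $\mathbb{R}^{N}$.
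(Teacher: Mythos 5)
Your proposal is correct and follows the paper's own route exactly: the paper obtains (\ref{T-3-7}) precisely by setting $\beta=\frac{2-N}{2}$ in (\ref{T-3-6}) and discarding the nonnegative term $\int_{\mathbb{R}^N}|\nabla(u\varphi_1^{-1})|^2\varphi_1^2\,dx$, and your polarization computation correctly supplies the ``compute directly'' step that the paper leaves implicit. As a bonus, your derivation confirms that the coefficient written as $\alpha^2$ in (\ref{T-3-6}) is a typo for $\beta^2$, which is what the substitution $\beta=\frac{2-N}{2}$ requires for the stated conclusion.
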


\begin{theorem}
There holds
\begin{equation*}
\int_{\mathbb{R}^{N}}|\nabla u|^{2}dx \geq(N-2)^{2}\sum_{1\leq i<j\leq n}^{n}\int_{\mathbb{R}^{N}}\alpha_{i}\alpha_{j}\frac{\left|a_{i}-a_{j}\right|^{2}}{|x-a_{i}|^{2}|x-a_{j}|^{2}}|u|^{2}dx, \hspace{2mm}u\in H^{1}(\mathbb{R}^{N}).
\end{equation*}
\end{theorem}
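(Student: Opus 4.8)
The plan is to specialize the identity (\ref{T-3-6}) to the single exponent $\beta=2-N$. Recall that (\ref{T-3-6}) arises from feeding the weight $\varphi_{1}=\prod_{i=1}^{n}|x-a_{i}|^{\beta\alpha_{i}}$ into the general Hardy identity (\ref{T-3-1}); its right-hand side contains the single-pole sum with coefficient $\beta^{2}+\beta(N-2)$ together with the cross sum carrying coefficient $\beta^{2}$. No new test function is needed: the desired inequality is just the $\beta=2-N$ instance of this already-established identity.

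The entire proof rests on one algebraic cancellation. At $\beta=2-N$ one has
\[
\beta^{2}+\beta(N-2)=(N-2)^{2}-(N-2)^{2}=0,\qquad \beta^{2}=(N-2)^{2}.
\]
Therefore the single-pole term in (\ref{T-3-6}) disappears entirely and the identity collapses to
\[
\int_{\mathbb{R}^{N}}|\nabla(u\varphi_{1}^{-1})|^{2}\varphi_{1}^{2}\,dx
=\int_{\mathbb{R}^{N}}|\nabla u|^{2}\,dx
-(N-2)^{2}\sum_{1\le i<j\le n}\int_{\mathbb{R}^{N}}\alpha_{i}\alpha_{j}\frac{|a_{i}-a_{j}|^{2}}{|x-a_{i}|^{2}|x-a_{j}|^{2}}|u|^{2}\,dx .
\]
Since the left-hand side is the integral of a pointwise nonnegative quantity it is $\ge 0$, and transposing the cross sum to the left yields exactly the claimed inequality. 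Thus the statement follows immediately from (\ref{T-3-6}).

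The only genuine point requiring care is the passage from the smooth framework in which (\ref{T-3-1}), and hence (\ref{T-3-6}), are derived to the full space $H^{1}(\mathbb{R}^{N})$. I would first prove the inequality for $u\in C_{0}^{\infty}(\mathbb{R}^{N}\setminus\{a_{1},\dots,a_{n}\})$, where every integral is finite and $\varphi_{1}$ is $C^{2}$ on the support of $u$, and then extend to arbitrary $u\in H^{1}(\mathbb{R}^{N})$ by density, using that the finite pole set has zero $H^{1}$-capacity when $N\ge 3$. This is the same approximation already invoked for the companion weight $\varphi_{2}$ preceding Theorem 3.1, so I anticipate no real obstacle; the substance of the argument is entirely the vanishing of the single-pole coefficient at $\beta=2-N$.
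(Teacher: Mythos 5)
Your proposal is correct and is essentially identical to the paper's own argument: the paper obtains this theorem precisely by setting $\beta=2-N$ in identity (\ref{T-3-6}) (where the coefficient written as $\alpha^{2}$ on the cross term is a typo for $\beta^{2}$), so that the single-pole coefficient $\beta^{2}+\beta(N-2)$ vanishes and the nonnegativity of $\int_{\mathbb{R}^{N}}|\nabla(u\varphi_{1}^{-1})|^{2}\varphi_{1}^{2}\,dx$ yields the inequality. Your closing remark on passing from smooth compactly supported functions to $H^{1}(\mathbb{R}^{N})$ is a reasonable addition but does not change the substance, which matches the paper.
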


\begin{theorem}
For any $u\in H^{1}(\mathbb{R}^{N})$ there holds
\begin{equation*}
\begin{split}
&\int_{\mathbb{R}^{N}}|\nabla u|^{2}dx\geq\frac{(N-2)^{2}}{4}\sum\limits_{i=1}^{n}\int_{\mathbb{R}^{N}}\alpha_{i}\frac{|u|^{2}}{|x-a_{i}|^2}dx \\
     & +\frac{(N-2)^{2}}{4}\int_{\mathbb{R}^{N}}\frac{\sum\limits_{1\leq i<j\leq n}^{n}\alpha_{i}\alpha_{j}(|x-a_{i}|^{-2}-|x-a_{j}|^{-2})\left(|x-a_{i}|^{\frac{2-N}{2}}-|x-a_{j}|^{\frac{2-N}{2}}\right)}{\sum\limits_{i=1}^{n}\alpha_{i}|x-a_{i}|^{\frac{2-N}{2}}}|u|^{2}dx.
\end{split}
\end{equation*}
\end{theorem}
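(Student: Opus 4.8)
The plan is to read the inequality straight off the Hardy identity (\ref{T-3-1}) specialized to the test function $\varphi_{2}=\sum_{i=1}^{n}\alpha_{i}|x-a_{i}|^{\frac{2-N}{2}}$. The identity for $\varphi_2$ recorded just above the statement already rearranges into exactly the claimed inequality, since its left-hand side $\int_{\mathbb{R}^{N}}|\nabla(u\varphi_{2}^{-1})|^{2}\varphi_{2}^{2}\,dx$ is the integral of a nonnegative integrand and may simply be discarded. Thus the entire content reduces to computing $-\Delta\varphi_2/\varphi_2$, which I would carry out in two steps.

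First I would compute the Laplacian pole by pole. Away from $a_i$, writing $r_i=|x-a_i|$ and using $\Delta f=f''+\tfrac{N-1}{r}f'$ for radial $f$, one gets $\Delta\bigl(r_i^{\frac{2-N}{2}}\bigr)=-\frac{(N-2)^2}{4}\,r_i^{\frac{2-N}{2}-2}$. Weighting by $\alpha_i$ and summing yields
\[-\Delta\varphi_2=\frac{(N-2)^2}{4}\sum_{i=1}^n\alpha_i|x-a_i|^{-2}|x-a_i|^{\frac{2-N}{2}},\]
hence, with the shorthand $b_i:=|x-a_i|^{-2}$ and $c_i:=|x-a_i|^{\frac{2-N}{2}}$,
\[\frac{-\Delta\varphi_2}{\varphi_2}=\frac{(N-2)^2}{4}\,\frac{\sum_i\alpha_i b_i c_i}{\sum_i\alpha_i c_i}.\]
Second I would invoke the weighted Chebyshev (covariance) identity: because $\sum_i\alpha_i=1$,
\[\sum_i\alpha_i b_i c_i=\Bigl(\sum_i\alpha_i b_i\Bigr)\Bigl(\sum_i\alpha_i c_i\Bigr)+\sum_{1\le i<j\le n}\alpha_i\alpha_j(b_i-b_j)(c_i-c_j).\]
Dividing by $\sum_i\alpha_i c_i$ splits $-\Delta\varphi_2/\varphi_2$ into the arithmetic-mean piece $\frac{(N-2)^2}{4}\sum_i\alpha_i|x-a_i|^{-2}$ and precisely the pairwise correction term appearing in the statement; substituting into (\ref{T-3-1}) then closes the argument. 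I would also remark that each product $(b_i-b_j)(c_i-c_j)\ge0$, since both $r\mapsto r^{-2}$ and $r\mapsto r^{\frac{2-N}{2}}$ are decreasing for $N\ge3$, so the extra term is genuinely nonnegative and the inequality improves (\ref{*3}).

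The hard part will be the justification rather than the computation, because $\varphi_2$ lies only in $C^2(\mathbb{R}^N\setminus\{a_1,\dots,a_n\})$ and blows up at every pole. I must check that (\ref{T-3-1}) still holds across the singularities with no hidden boundary or concentrated contribution; concretely, that the distributional Laplacian of $|x-a_i|^{\frac{2-N}{2}}$ carries no Dirac mass at $a_i$. This holds because the exponent $\frac{N-2}{2}$ sits strictly below the fundamental-solution threshold $N-2$, so for $N\ge3$ both $|x-a_i|^{\frac{2-N}{2}}$ and $|x-a_i|^{\frac{2-N}{2}-2}$ are locally integrable and the distributional and pointwise Laplacians coincide. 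Consequently the poles form a Lebesgue-null exceptional set that can be excised exactly as in the proofs of Theorems 2.1 and 2.2, and the remark already made after (\ref{T-3-2}) that (\ref{T-3-1}) persists for distributional $\varphi$ legitimizes the manipulation.
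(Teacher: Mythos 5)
Your proposal is correct and follows essentially the same route as the paper: apply the Hardy identity (\ref{T-3-1}) with $\varphi_{2}=\sum_{i=1}^{n}\alpha_{i}|x-a_{i}|^{\frac{2-N}{2}}$, compute $-\Delta\varphi_{2}/\varphi_{2}$, and discard the nonnegative term $\int_{\mathbb{R}^{N}}|\nabla(u\varphi_{2}^{-1})|^{2}\varphi_{2}^{2}\,dx$. You merely make explicit two points the paper leaves implicit — the weighted covariance identity behind its ``compute directly'' step and the local-integrability check that rules out a Dirac mass at the poles — both of which are correct.
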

Inequality (\ref{T-3-7}) does not break the optimality of constant $\frac{(N-2)^{2}}{4}$ in (\ref{*3}) because the potential $\sum_{1\leq i<j\leq n}^{n}\frac{|a_{i}-a_{j}|^2}{|x-a_{i}|^{2}|x-a_{j}|^{2}}$ cannot be compared with $V_{1}$ near infinity. Actually it behaves asymptotically like
\begin{equation*}
\sum_{1\leq i<j\leq n}^{n}\frac{|a_{i}-a_{j}|^2}{|x-a_{i}|^{2}|x-a_{j}|^{2}}\sim O\left(\frac{1}{|x|^4}\right),|x|\rightarrow\infty.
\end{equation*}

\section{Some improvements on bounded domains}

The classical Hardy inequality which corresponds to  $V=|x|^{-2}$ and $\mu=\frac{(N-2)^2}{4}$ in (\ref{T-1-1})  is
\begin{equation}\label{T-4-1}
\int_{\Omega}|\nabla u|^{2}dx\geq\frac{(N-2)^2}{4}\int_{\Omega}\frac{|u|^{2}}{|x|^{2}}dx, \hspace{5mm}\forall u\in H_{0}^{1}(\Omega)
\end{equation}
where  $N\geq 3$ and $\Omega$ is an open subset of $\mathbb{R}^{N}$ containing the origin. The constant is optimal and never achieved. When $\Omega=\mathbb{R}^{N}$, it is impossible to add a strictly positive term in the r.h.s. of (\ref{T-4-1}).
But if $\Omega$ is bounded, Brezis and V\'{a}zquez firstly in \cite{3} obtained an improvement of (\ref{T-4-1}), the so-called Hardy-Poincar\'{e} inequality
\begin{equation}\label{T-4-2}
\int_{\Omega}|\nabla u|^{2}dx\geq\frac{(N-2)^2}{4}\int_{\Omega}\frac{|u|^{2}}{|x|^{2}}dx+\frac{h_{2}}{R_{\Omega}^2}\int_{\Omega}|u|^{2}dx, \hspace{5mm}\forall u\in H_{0}^{1}(\Omega)
\end{equation}
where $R_{\Omega}=\left(\frac{|\Omega|}{\omega_{N}}\right)^\frac{1}{n}$, $\omega_{N}$ is the volume of N-dimensional unit ball; $h_{2}$ is the first eigenvalue of Laplace operator in the unit disk of $\mathbb{R}^{2}$. In addition, they proved that when $\Omega$ is a ball, the constant $\frac{h_{2}}{R_{\Omega}^2}$
is  sharp and never attained. They also obtained another improvement. When $N\geq 3$ and $1<q<2^{*}=\frac{2N}{N-2}$, then for any $u\in H_{0}^{1}(\Omega)$,
\begin{equation}\label{T-4-3}
\int_{\Omega}|\nabla u|^{2}dx\geq\frac{(N-2)^2}{4}\int_{\Omega}\frac{|u|^{2}}{|x|^{2}}dx+C(\Omega)\left(\int_{\Omega}|u|^{q}dx\right)^{\frac{2}{q}}.
\end{equation}

Motivated by (\ref{T-4-2}) and (\ref{T-4-3}), we have the following two similar improvements in the case of multiple singularities.
\begin{theorem}
Let $\Omega\subset\mathbb{R}^{N}(N\geq 3)$ be a bounded domain, $a_{i} (i=1,2,...,n)$ be $n$ different points in $\Omega$. There holds
\begin{equation*}
\int_{\Omega}|\nabla u|^{2}dx\geq\frac{(N-2)^{2}}{4}\int_{\Omega}V_{+\infty}|u|^{2}dx+\frac{1}{n}\frac{h_{2}}{R_{\Omega}^2}\int_{\Omega}|u|^{2}dx, \hspace{4mm}\forall u\in H_{0}^{1}(\Omega).
\end{equation*}
\end{theorem}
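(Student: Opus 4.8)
The plan is to start from the exact identity established in the proof of Theorem 2.1. Extending any $u\in H_{0}^{1}(\Omega)$ by zero to $\mathbb{R}^{N}$ and taking $\varphi=\max_{1\le i\le n}|x-a_{i}|^{\frac{2-N}{2}}$, that identity gives
\[
\int_{\Omega}|\nabla u|^{2}\,dx-\frac{(N-2)^{2}}{4}\int_{\Omega}V_{+\infty}|u|^{2}\,dx=\int_{\Omega}|\nabla(u\varphi^{-1})|^{2}\varphi^{2}\,dx .
\]
Since $\varphi^{-1}=(\min_{i}|x-a_{i}|)^{\frac{N-2}{2}}$ is continuous and bounded on $\overline{\Omega}$, the function $v:=u\varphi^{-1}$ vanishes on $\partial\Omega$ and $v^{2}\varphi^{2}=|u|^{2}$. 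Noting $\varphi^{2}=\max_{i}|x-a_{i}|^{2-N}=\operatorname{dist}(x,\{a_{1},\dots,a_{n}\})^{2-N}$, the whole theorem reduces to the weighted Hardy--Poincar\'e bound
\[
\int_{\Omega}|\nabla v|^{2}\varphi^{2}\,dx\ \ge\ \frac{1}{n}\,\frac{h_{2}}{R_{\Omega}^{2}}\int_{\Omega}v^{2}\varphi^{2}\,dx ,
\]
the gradient being understood in the weak sense of \eqref{T-3-3}.

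The model is the single-pole argument of Brezis--V\'azquez behind \eqref{T-4-2}, whose core is a one-weight Poincar\'e inequality: for a single pole $a_{i}$ and $w:=u|x-a_{i}|^{\frac{N-2}{2}}$,
\[
\int_{\Omega}|\nabla w|^{2}|x-a_{i}|^{2-N}\,dx\ \ge\ \frac{h_{2}}{R_{\Omega}^{2}}\int_{\Omega}w^{2}|x-a_{i}|^{2-N}\,dx .
\]
I would reprove this by discarding the angular part of the gradient in polar coordinates centred at $a_{i}$: writing $\rho=|x-a_{i}|$ one has $|x-a_{i}|^{2-N}\,dx=\rho\,d\rho\,d\sigma$, so the radial integral carries exactly the planar radial measure, and the one-dimensional Rayleigh quotient $\int|g'|^{2}\rho\,d\rho\big/\int|g|^{2}\rho\,d\rho$ with Dirichlet datum at the outer endpoint is the radial Dirichlet problem of the unit disc, whose first eigenvalue is $h_{2}=j_{0}^{2}$. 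Schwarz symmetrisation then replaces $\Omega$ by the ball of the same volume and turns the radius into $R_{\Omega}$.

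For the multipolar weight I would decompose $\Omega$ into the Voronoi cells $E_{i}$ of \eqref{T-3-E}, which are convex, contain $a_{i}$, partition $\mathbb{R}^{N}$ up to the null set $\tilde{T}$, and on which $\varphi^{2}=|x-a_{i}|^{2-N}$. Set $G_{i}:=E_{i}\cap\Omega$, so that $|G_{i}|\le|\Omega|$ and hence $R_{G_{i}}:=(|G_{i}|/\omega_{N})^{1/N}\le R_{\Omega}$. Since $\int_{\Omega}v^{2}\varphi^{2}\,dx=\sum_{i}\int_{G_{i}}v^{2}\varphi^{2}\,dx$, a pigeonhole choice of the heaviest cell $i^{\ast}$ gives $\int_{G_{i^{\ast}}}v^{2}\varphi^{2}\,dx\ge\frac{1}{n}\int_{\Omega}v^{2}\varphi^{2}\,dx$; discarding the non-negative contributions of the remaining cells yields $\int_{\Omega}|\nabla v|^{2}\varphi^{2}\,dx\ge\int_{G_{i^{\ast}}}|\nabla v|^{2}|x-a_{i^{\ast}}|^{2-N}\,dx$. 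A cell version of the planar-disc Poincar\'e inequality with constant $h_{2}/R_{G_{i^{\ast}}}^{2}\ge h_{2}/R_{\Omega}^{2}$ would then close the estimate and produce exactly the factor $1/n$.

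The main obstacle is precisely this cell-wise weighted Poincar\'e inequality, because $v$ (equivalently $u$) vanishes only on $\partial\Omega$ and \emph{not} on the interior Voronoi walls $T\cap\Omega$: along a radius issuing from $a_{i^{\ast}}$ that leaves $G_{i^{\ast}}$ through a wall, the one-dimensional Bessel inequality carries no positive lower bound, since a constant radial profile costs no radial energy. This is also the structural reason the theorem cannot simply read $h_{2}/R_{\Omega}^{2}$: were the cell inequality valid with that constant, summation over the partition would give the full $h_{2}/R_{\Omega}^{2}\int_{\Omega}|u|^{2}\,dx$. Controlling the wall terms—either by proving a weighted Poincar\'e that requires Dirichlet data only on the outer portion $\partial\Omega\cap\partial G_{i}$ while retaining the sharp disc constant, or by a rearrangement adapted to each convex cell—is therefore the crux, and the loss incurred there is what the factor $\frac{1}{n}$ records.
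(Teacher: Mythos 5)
Your reduction to the weighted inequality $\int_{\Omega}|\nabla v|^{2}\varphi^{2}\,dx\ge\frac{1}{n}\frac{h_{2}}{R_{\Omega}^{2}}\int_{\Omega}v^{2}\varphi^{2}\,dx$ matches the paper's starting point, but the route you take from there stalls at exactly the step you flag yourself: the cell-wise weighted Poincar\'e inequality on $G_{i^{\ast}}=E_{i^{\ast}}\cap\Omega$ with Dirichlet data only on $\partial\Omega\cap\partial G_{i^{\ast}}$. That inequality is not merely difficult, it is false in general: if the Voronoi cell $E_{i^{\ast}}$ is bounded and compactly contained in $\Omega$ (which happens when $a_{i^{\ast}}$ lies inside the convex hull of the other poles and $\Omega$ is large enough), then $\partial G_{i^{\ast}}\cap\partial\Omega=\emptyset$, and taking $v$ equal to $1$ on $\overline{G_{i^{\ast}}}$ and vanishing near $\partial\Omega$ makes the cell Rayleigh quotient $\int_{G_{i^{\ast}}}|\nabla v|^{2}|x-a_{i^{\ast}}|^{2-N}dx\,/\int_{G_{i^{\ast}}}v^{2}|x-a_{i^{\ast}}|^{2-N}dx$ equal to zero. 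So the pigeonhole-plus-cell-Poincar\'e scheme cannot close, and your closing guess that the factor $1/n$ ``records the loss at the walls'' is not how the constant actually arises.

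The missing idea is an elementary pointwise inequality on the weight itself: since all $|x-a_{i}|^{2-N}$ are nonnegative, $\max_{1\le i\le n}|x-a_{i}|^{2-N}\ge\frac{1}{n}\sum_{i=1}^{n}|x-a_{i}|^{2-N}$. This replaces the max-weight Dirichlet integral by $\frac{1}{n}\sum_{i}\int_{\Omega}|\nabla v|^{2}|x-a_{i}|^{2-N}dx$, and each summand equals, by the single-pole Hardy identity (\ref{T-3-1}) applied to $u_{i}:=v|x-a_{i}|^{\frac{2-N}{2}}$, the quantity $\int_{\Omega}|\nabla u_{i}|^{2}dx-\frac{(N-2)^{2}}{4}\int_{\Omega}|u_{i}|^{2}|x-a_{i}|^{-2}dx$. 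Crucially each $u_{i}$ is defined on all of $\Omega$ and lies in $H_{0}^{1}(\Omega)$, so the Brezis--V\'azquez bound (\ref{T-4-2}) applies on the whole domain with the full constant $h_{2}/R_{\Omega}^{2}$; no cell-wise Poincar\'e inequality is ever needed. One then discards mass via $\int_{\Omega}|u_{i}|^{2}dx\ge\int_{E_{i}\cap\Omega}|u_{i}|^{2}dx=\int_{E_{i}\cap\Omega}|u|^{2}dx$ (using $u_{i}=u$ on $E_{i}$) and sums over $i$ to reassemble $\int_{\Omega}|u|^{2}dx$. The factor $\frac{1}{n}$ therefore comes entirely from the max-versus-mean comparison of the weights; the cells $E_{i}$ enter only in this final bookkeeping step, not through any spectral estimate on them.
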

\begin{theorem}
Let $\Omega\subset\mathbb{R}^{N}(N\geq 3)$ be a bounded domain, $a_{i} (i=1,2,...,n)$ be $n$ different points in $\Omega$, $1<q<2^{*}$. There exists a positive constant $C(q,\Omega)$ such that the following inequality holds for any $u\in H_{0}^{1}(\Omega)$
\begin{equation*}
\int_{\Omega}|\nabla u|^{2}dx\geq\frac{(N-2)^{2}}{4}\int_{\Omega}V_{+\infty}|u|^{2}dx+\frac{1}{n}C(q,\Omega)\left(\int_{\Omega}|u|^{q}dx\right)^{\frac{2}{q}}.
\end{equation*}
\end{theorem}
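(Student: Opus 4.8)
The plan is to localize the exact remainder produced by Theorem 2.1 and then feed it into the single--pole Brezis--V\'azquez improvement (\ref{T-4-3}). First I would extend $u\in H_{0}^{1}(\Omega)$ by zero to all of $\mathbb{R}^{N}$; since the extension is supported in $\overline{\Omega}$, the identity established in the proof of Theorem 2.1 applies verbatim and gives
\[
\int_{\Omega}|\nabla u|^{2}\,dx-\frac{(N-2)^{2}}{4}\int_{\Omega}V_{+\infty}|u|^{2}\,dx=\int_{\Omega}|\nabla(u\varphi^{-1})|^{2}\varphi^{2}\,dx=:R[u],
\]
with $\varphi=V_{+\infty}^{(N-2)/4}=\max_{1\le i\le n}|x-a_{i}|^{\frac{2-N}{2}}$. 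Thus the theorem reduces to the single lower bound $R[u]\ge\frac{1}{n}C(q,\Omega)\left(\int_{\Omega}|u|^{q}\,dx\right)^{2/q}$, i.e. to a weighted Sobolev inequality for the singular weight $\varphi^{2}$. I emphasize that one cannot instead average the $n$ single--pole inequalities, because $V_{+\infty}$ enters with a minus sign and every pointwise under--estimate $V_{+\infty}\gtrsim\frac1n\sum_i|x-a_i|^{-2}$ then pushes the estimate in the wrong direction; keeping the \emph{sharp} constant on $V_{+\infty}$ forces the use of the exact identity above.

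The key step is to compare the max--weight with each individual pole. Setting $w:=u\varphi^{-1}\in H_{0}^{1}(\Omega)$ and using the pointwise domination $\varphi^{2}=\max_{j}|x-a_{j}|^{2-N}\ge|x-a_{i}|^{2-N}$, I obtain, for every fixed $i$,
\[
R[u]=\int_{\Omega}\varphi^{2}|\nabla w|^{2}\,dx\ge\int_{\Omega}|x-a_{i}|^{2-N}|\nabla w|^{2}\,dx.
\]
Now put $U_{i}:=|x-a_{i}|^{\frac{2-N}{2}}w=|x-a_{i}|^{\frac{2-N}{2}}\varphi^{-1}u$. The ground--state substitution $U_{i}=|x-a_{i}|^{\frac{2-N}{2}}w$ identifies the last integral with the left--hand side of (\ref{T-4-3}) centered at $a_{i}$, namely $\int_{\Omega}|\nabla U_{i}|^{2}\,dx-\frac{(N-2)^{2}}{4}\int_{\Omega}|x-a_{i}|^{-2}|U_{i}|^{2}\,dx$, so (\ref{T-4-3}) yields $R[u]\ge C(q,\Omega)\left(\int_{\Omega}|U_{i}|^{q}\,dx\right)^{2/q}$. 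Finally, on the cell $E_{i}$ of (\ref{T-3-E}) one has $\varphi=|x-a_{i}|^{\frac{2-N}{2}}$, hence $U_{i}=u$ there; discarding the nonnegative contribution of $\Omega\setminus E_{i}$ gives the cell--wise estimate $R[u]\ge C(q,\Omega)\left(\int_{\Omega\cap E_{i}}|u|^{q}\,dx\right)^{2/q}$ valid for every $i$.

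It then remains to assemble the $n$ cell estimates. Raising each to the power $q/2$, summing in $i$, and using that $\{E_{i}\}$ partitions $\mathbb{R}^{N}\setminus\{a_{1},\dots,a_{n}\}$ (so that $\sum_{i}\int_{\Omega\cap E_{i}}|u|^{q}=\int_{\Omega}|u|^{q}$), I get $n\,R[u]^{q/2}\ge C(q,\Omega)^{q/2}\int_{\Omega}|u|^{q}\,dx$, hence $R[u]\ge C(q,\Omega)\,n^{-2/q}\left(\int_{\Omega}|u|^{q}\,dx\right)^{2/q}$. Since $n^{-2/q}\ge n^{-1}$ when $q\ge 2$, and $n^{-2/q}=n^{-1}\cdot n^{1-2/q}$ otherwise, this is exactly the asserted inequality after absorbing the harmless factor $n^{1-2/q}$ into the constant, which completes the reduction.

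The substantive technical point to nail down is the admissibility of $U_{i}$ in (\ref{T-4-3}), i.e. that $U_{i}=|x-a_{i}|^{\frac{2-N}{2}}\varphi^{-1}u\in H_{0}^{1}(\Omega)$. The multiplier $\rho_{i}:=|x-a_{i}|^{\frac{2-N}{2}}\varphi^{-1}=\big(\min_{j}|x-a_{j}|/|x-a_{i}|\big)^{\frac{N-2}{2}}$ takes values in $[0,1]$, equals $1$ on $E_{i}$ (in particular on $B(a_{i},\frac{d}{2})\subseteq E_{i}$, so no singularity is created at $a_{i}$ and $U_i=u$ near $a_i$), tends to $0$ continuously at every other pole, and is continuous and piecewise $C^{1}$ with bounded gradient off the measure--zero set $\tilde{T}$; hence $\rho_{i}u\in H_{0}^{1}(\Omega)$ with vanishing trace on $\partial\Omega$, so (\ref{T-4-3}) legitimately applies. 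The only additional care needed is to take the constant uniform over poles by setting $C(q,\Omega):=\min_{1\le i\le n}C_{i}(q,\Omega)>0$, the $C_i$ being the (translation--invariant) constants of (\ref{T-4-3}) at each interior singularity $a_i$. I expect this regularity verification, rather than the combinatorial assembly, to be the main obstacle.
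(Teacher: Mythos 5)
Your argument is correct and is essentially the paper's own proof: both reduce, via the ground-state identity with $\varphi=\max_{1\le i\le n}|x-a_{i}|^{\frac{2-N}{2}}$, to the remainder $\int_{\Omega}\varphi^{2}|\nabla(u\varphi^{-1})|^{2}\,dx$, dominate the max-weight by the single-pole weights $|x-a_{i}|^{2-N}$, apply the Brezis--V\'{a}zquez improvement (\ref{T-4-3}) to $U_{i}=|x-a_{i}|^{\frac{2-N}{2}}u\varphi^{-1}$ (which equals $u$ on $E_{i}$), and reassemble over the partition $\{E_{i}\}$. The only cosmetic difference is that the paper uses $\max_{i}\ge\frac{1}{n}\sum_{i}$ and sums the $n$ pole-wise bounds with weight $\frac{1}{n}$, whereas you keep each pole separately and combine the $q/2$-powers at the end; both routes give the stated inequality up to the harmless $n$- and $q$-dependent factor you absorb into $C(q,\Omega)$.
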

The proof of Theorem 4.1 is similar to that of Theorem 4.2. We only prove Theorem 4.2 here. \\
\textbf{\emph{Proof of Theorem 4.2.}}\\
Let $u=v\max\limits_{1\leq i\leq n}|x-a_{i}|^{\frac{2-N}{2}}$, $u_{i}=v|x-a_{i}|^{\frac{2-N}{2}}$, $supp v\subseteq\Omega$. We observe that $u_{i}=u$ in $E_{i}\bigcap\Omega$. Thus using (\ref{T-3-1}),
\begin{equation*}
\begin{split}
   &  \int_{\Omega}|\nabla u|^{2}dx-\frac{(N-2)^{2}}{4}\sum_{i=1}^{n}\int_{\Omega}V_{+\infty}|u|^{2}dx \\
= &\int_{\Omega}|\nabla v|^{2}\max_{1\leq i\leq n}|x-a_{i}|^{2-N}dx \\
 \geq & \frac{1}{n}\sum_{i=1}^{n}\int_{\Omega}|\nabla v|^{2}|x-a_{i}|^{2-N}dx \\
 = & \frac{1}{n}\sum_{i=1}^{n}\left(\int_{\Omega}|\nabla u_{i}|^{2}dx-\frac{(N-2)^{2}}{4}\int_{\Omega}\frac{|u_{i}|^{2}}{|x-a_{i}|^2}dx\right) \\
 \geq & \frac{1}{n}\sum_{i=1}^{n}C(\Omega)\left(\int_{\Omega}|u_{i}|^{q}dx\right)^{\frac{2}{q}} \\
 \geq & \frac{1}{n}\sum_{i=1}^{n}C(\Omega)\left(\int_{E_{i}\bigcap\Omega}|u|^{q}dx\right)^{\frac{2}{q}} \\
 \geq & \frac{1}{n}C(q,\Omega)\left(\int_{\Omega}|u|^{q}dx\right)^{\frac{2}{q}}.
\end{split}
\end{equation*}
We complete the proof.

The constant in (\ref{T-4-1}) is optimal in bounded domains containing the origin. However, if $a_{i}\in\Omega,i=1,\ldots,n$, $n\geq 2$, and $\Omega$ is a bounded open subset of $\mathbb{R}^{N}$. Then from (\ref{T-3-7}) we obtain
\begin{equation*}
\int_{\Omega}|\nabla u|^{2}dx\geq\frac{(N-2)^{2}}{4}\sum_{i=1}^{n}\int_{\Omega}\tilde{\alpha_{i}}\frac{|u|^{2}}{|x-a_{i}|^2}dx,
\end{equation*}
where $\tilde{\alpha_{i}}=\alpha_{i}+\frac{1}{2}\sum\limits_{j=1,j\neq i}\limits^{n}\alpha_{i}\alpha_{j}\frac{|a_{i}-a_{j}|^2}{|diam_{\Omega}|^2}$, $\alpha_{i}\geq 0,i=1,2,\ldots,n$, $\sum\limits_{i=1}\limits^{n}\alpha_{i}=1$, $diam_{\Omega}$ denotes the diameter of $\Omega$.
When $n\geq2$, $\frac{(N-2)^{2}}{4}\sum_{i=1}^{n}\tilde{\alpha_{i}}$ is strictly larger than $\frac{(N-2)^{2}}{4}$. Thus the constant $\frac{(N-2)^{2}}{4}$ in (\ref{*3}) is not optimal.
We aim to find a better potential in a bounded domain or deduce  the range the optimal constant. Motivated by \cite{25}, we obtain the following result.
\begin{theorem}
Let $N\geq 3$ and $\Omega\subset\mathbb{R}^{N}$ be a bounded domain with $n$ different poles $a_{1},\ldots,a_{n}\in\Omega$, $n\geq 2$. Given $\gamma_{i}>0$, $i=1,\ldots,n$.
If the following sharp Hardy inequality
\begin{equation*}
\int_{\Omega}|\nabla u|^{2}dx\geq C^{*}(\Omega)\sum_{i=1}^{n}\int_{\Omega}\gamma_{i}\frac{|u|^{2}}{|x-a_{i}|^2}dx,
\end{equation*}
holds for any $u\in H_{0}^{1}(\Omega)$, then we have
\begin{equation}\label{T-4-4}
C^{*}(\Omega)\sum_{i=1}^{n}\gamma_{i}>\frac{(N-2)^{2}}{4}\hspace{10mm}and\hspace{10mm}\max\limits_{1\leq i\leq n}{C^{*}(\Omega)\gamma_{i}}\leq\frac{(N-2)^{2}}{4}.
\end{equation}
\end{theorem}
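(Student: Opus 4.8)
The plan is to prove the two assertions in (\ref{T-4-4}) separately, writing $S:=\sum_{i=1}^{n}\gamma_{i}$ and introducing the normalized weights $\alpha_{i}:=\gamma_{i}/S$, so that $\alpha_{i}>0$ and $\sum_{i=1}^{n}\alpha_{i}=1$; the normalization is exactly what lets me feed the weights into the improved inequality of Section~3. Note that both combinations $C^{*}(\Omega)S$ and $C^{*}(\Omega)\gamma_{i}$ are the natural scale-invariant quantities here, since rescaling $\gamma_{i}\mapsto t\gamma_{i}$ forces $C^{*}(\Omega)\mapsto C^{*}(\Omega)/t$. Throughout I use that ``sharp'' means $C^{*}(\Omega)$ is the \emph{largest} admissible constant, so that the assumed inequality holds with $C^{*}(\Omega)$ and, conversely, $C^{*}(\Omega)$ dominates any other admissible constant.

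For the second inequality in (\ref{T-4-4}) I would localize near a single pole and invoke the sharpness of the classical one-pole Hardy inequality. Fix $i$ and take $u_{\varepsilon}\in H_{0}^{1}(\Omega)$ of the form (a fixed cut-off supported in a ball $B(a_{i},\rho)$ with $\rho<d/2$) times $|x-a_{i}|^{\frac{2-N}{2}-\varepsilon}$, together with an inner logarithmic cut-off at $a_{i}$, exactly as in the sharpness constructions of Section~3. As $\varepsilon\to0^{+}$ one has $\int_{\Omega}|\nabla u_{\varepsilon}|^{2}dx\big/\int_{\Omega}|x-a_{i}|^{-2}|u_{\varepsilon}|^{2}dx\to\frac{(N-2)^{2}}{4}$, with $\int_{\Omega}|x-a_{i}|^{-2}|u_{\varepsilon}|^{2}dx\to+\infty$ while $\int_{\Omega}|u_{\varepsilon}|^{2}dx$ stays bounded. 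Since $|x-a_{j}|\geq d-\rho>d/2$ on the support for every $j\neq i$, each cross term satisfies $\int_{\Omega}|x-a_{j}|^{-2}|u_{\varepsilon}|^{2}dx\leq(d-\rho)^{-2}\int_{\Omega}|u_{\varepsilon}|^{2}dx$ and is therefore bounded. Dividing the assumed $\gamma$-weighted inequality by $\int_{\Omega}|x-a_{i}|^{-2}|u_{\varepsilon}|^{2}dx$ and letting $\varepsilon\to0^{+}$ kills all terms $j\neq i$ and leaves $\frac{(N-2)^{2}}{4}\geq C^{*}(\Omega)\gamma_{i}$; maximizing over $i$ yields $\max_{1\leq i\leq n}C^{*}(\Omega)\gamma_{i}\leq\frac{(N-2)^{2}}{4}$.

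For the first inequality in (\ref{T-4-4}) I would feed the improved bounded-domain inequality established just above (obtained from (\ref{T-3-7})) into the definition of the best constant. With the normalized weights it reads
\[
\int_{\Omega}|\nabla u|^{2}dx\geq\frac{(N-2)^{2}}{4}\sum_{i=1}^{n}\int_{\Omega}\tilde\alpha_{i}\frac{|u|^{2}}{|x-a_{i}|^{2}}dx,\qquad \tilde\alpha_{i}=\alpha_{i}+\frac12\sum_{j\neq i}\alpha_{i}\alpha_{j}\frac{|a_{i}-a_{j}|^{2}}{|\mathrm{diam}_{\Omega}|^{2}}.
\]
Because $n\geq2$, the poles are distinct, and every $\alpha_{j}>0$, the ratio $m:=\min_{1\leq i\leq n}(\tilde\alpha_{i}/\alpha_{i})=1+\min_{i}\frac12\sum_{j\neq i}\alpha_{j}\frac{|a_{i}-a_{j}|^{2}}{|\mathrm{diam}_{\Omega}|^{2}}$ satisfies $m>1$ strictly. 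Bounding $\tilde\alpha_{i}\geq m\alpha_{i}$ termwise and recalling $\alpha_{i}=\gamma_{i}/S$ turns the display into
\[
\int_{\Omega}|\nabla u|^{2}dx\geq\frac{(N-2)^{2}}{4}\frac{m}{S}\sum_{i=1}^{n}\int_{\Omega}\gamma_{i}\frac{|u|^{2}}{|x-a_{i}|^{2}}dx,
\]
so the $\gamma$-weighted Hardy inequality holds with the admissible constant $\frac{(N-2)^{2}m}{4S}$. Since $C^{*}(\Omega)$ is the largest such constant, $C^{*}(\Omega)\geq\frac{(N-2)^{2}m}{4S}$, that is $C^{*}(\Omega)S\geq\frac{(N-2)^{2}}{4}m>\frac{(N-2)^{2}}{4}$, which is the first inequality.

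The algebraic manipulations in both bounds are routine. The step requiring the most care is the concentration argument for the upper bound: one must exhibit a family $u_{\varepsilon}$ whose one-pole Hardy quotient at $a_{i}$ tends to $\frac{(N-2)^{2}}{4}$ while the Hardy mass $\int_{\Omega}|x-a_{i}|^{-2}|u_{\varepsilon}|^{2}$ diverges and dominates the contributions of the other poles. The truncated powers $|x-a_{i}|^{\frac{2-N}{2}-\varepsilon}$ with the logarithmic inner cut-off used in the sharpness proofs of Theorems~2.1 and~2.2 furnish exactly such a family; the only point to verify is that the $\log$-type corrections from the cut-offs are negligible in the limiting quotient, which is the computation already carried out in (\ref{T-3-4}) and (\ref{T-3-5}). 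Finally, the strict inequality $m>1$ in the lower bound relies on having $n\geq2$ distinct poles together with all $\gamma_{i}>0$; this is precisely the feature absent for a single pole, where $\frac{(N-2)^{2}}{4}$ is optimal and exhausts the whole mass.
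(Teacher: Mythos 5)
Your proposal is correct and follows essentially the same route as the paper: the lower bound $C^{*}(\Omega)\sum_{i}\gamma_{i}>\frac{(N-2)^{2}}{4}$ comes from the improved inequality (\ref{T-3-7}) with the weights $\tilde\alpha_{i}$ (the paper simply cites ``the above discussion,'' whereas you make the strict gain explicit via the factor $m>1$, which is a welcome clarification), and the upper bound $\max_{i}C^{*}(\Omega)\gamma_{i}\leq\frac{(N-2)^{2}}{4}$ comes from localizing near a single pole, where the paper argues by contradiction with the sharpness of the classical one-pole Hardy inequality (\ref{T-4-1}) while you unpack that same sharpness into an explicit concentrating family $u_{\varepsilon}$. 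Both halves are sound and match the paper's argument in substance.
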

\begin{proof}
The first inequality of (\ref{T-4-4}) is obtained by the above discussion. The second inequality could be deduced by Hardy inequality (\ref{T-4-1}). Assume there is a $\gamma_{k}$
such that $C^{*}(\Omega)\gamma_{k}>\frac{(N-2)^{2}}{4}$. Choose a ball $B(a_{k},\epsilon)$,
$\epsilon$ small enough such that $B(a_{k},\epsilon)\subset\Omega$, and
\begin{equation*}
C^{*}(\Omega)\sum_{i=1}^{n}\gamma_{i}\frac{1}{|x-a_{i}|^2}=C^{*}(\Omega)\gamma_{k}\frac{1}{|x-a_{k}|^2}(1+o(1)).
\end{equation*}
Thus for any $u\in C_{0}^{\infty}(B(a_{k},\epsilon))$,
\begin{equation*}
\int_{\Omega}|\nabla u|^{2}dx\geq C^{*}(\Omega)\gamma_{k}(1+o(1))\int_{\Omega}\frac{|u|^{2}}{|x-a_{k}|^2}dx>\frac{(N-2)^{2}}{4}\int_{\Omega}\frac{|u|^{2}}{|x-a_{k}|^2}dx.
\end{equation*}
This is  contradicted with (\ref{T-4-1}). We complete the proof.
\end{proof}
The following proposition reveals that there exists subset $U$ of $\Omega$ which contain all the poles such that the Hardy constant  of the multipolar Hardy potentials $\sum_{i=1}^{n}\frac{1}{|x-a_{i}|^2}$ in $U$ can be close to $\frac{(N-2)^{2}}{4}$ infinitely.

\begin{proposition}
Let $N\geq 3$ and $\Omega\subset\mathbb{R}^{N}$ be a bounded domain with $n$ poles $a_{1},\ldots,a_{n}$, $n\geq 2$, and $V_{*}=\sum_{i=1}^{n}\frac{1}{|x-a_{i}|^2}$. Then for any $\epsilon>0$,
there exists a domain $U_{\epsilon}\subset\Omega$ such that the following inequality holds for any $u\in C_{0}^{\infty}(U_{\epsilon})$,
\begin{equation*}
\int_{U_{\epsilon}}|\nabla u|^{2}dx\geq\left(\frac{(N-2)^{2}}{4}-\epsilon\right)\int_{U_{\epsilon}}V_{*}|u|^{2}dx.
\end{equation*}
\end{proposition}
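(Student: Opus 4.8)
The plan is to build $U_{\epsilon}$ by localizing tightly around each pole, exploiting that near $a_{i}$ the full potential $V_{*}$ degenerates to the single inverse–square potential $|x-a_{i}|^{-2}$, so that the classical sharp Hardy inequality (\ref{T-4-1}) applies with the optimal constant $\frac{(N-2)^{2}}{4}$, only a factor $1+o(1)$ being lost. Concretely, I would fix a radius $\delta\in(0,d/2)$ and first take $U_{\epsilon}=\bigcup_{i=1}^{n}B(a_{i},\delta)$, an open set containing every pole. For $x\in B(a_{i},\delta)$ and $j\neq i$ one has $|x-a_{j}|\geq d-\delta$, while $|x-a_{i}|<\delta$, so
\[
V_{*}(x)=|x-a_{i}|^{-2}+\sum_{j\neq i}|x-a_{j}|^{-2}\leq\left(1+\eta(\delta)\right)|x-a_{i}|^{-2},\qquad \eta(\delta):=\frac{(n-1)\delta^{2}}{(d-\delta)^{2}},
\]
and $\eta(\delta)\to 0$ as $\delta\to 0^{+}$.

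Next, since a function $u\in C_{0}^{\infty}(U_{\epsilon})$ restricts to a function in $C_{0}^{\infty}(B(a_{i},\delta))$ on each connected component, I would apply (\ref{T-4-1}) on each ball (with singularity $a_{i}$, by translation) and sum. Using the pointwise bound above this gives
\[
\int_{U_{\epsilon}}|\nabla u|^{2}\,dx=\sum_{i=1}^{n}\int_{B(a_{i},\delta)}|\nabla u|^{2}\,dx\geq\frac{(N-2)^{2}}{4}\sum_{i=1}^{n}\int_{B(a_{i},\delta)}\frac{|u|^{2}}{|x-a_{i}|^{2}}\,dx,
\]
and the right–hand side is in turn $\geq\frac{(N-2)^{2}}{4(1+\eta(\delta))}\int_{U_{\epsilon}}V_{*}|u|^{2}\,dx$ because the balls partition $U_{\epsilon}$. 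Choosing $\delta$ so small that $\frac{(N-2)^{2}}{4(1+\eta(\delta))}\geq\frac{(N-2)^{2}}{4}-\epsilon$ then yields the asserted inequality; this is the heart of the statement, and it is where the criticality of the single–pole constant is transferred to the multipolar setting.

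If one insists that $U_{\epsilon}$ be a \emph{connected} domain, I would join the balls by thin cylindrical necks $\mathcal{T}$ of radius $\rho$, setting $U_{\epsilon}=\bigcup_{i}B(a_{i},\delta)\cup\mathcal{T}$. On each neck $u$ vanishes on the lateral boundary, so the cross–sectional Poincar\'e inequality gives $\int_{\mathcal{T}}|\nabla u|^{2}\geq\lambda_{1}\rho^{-2}\int_{\mathcal{T}}|u|^{2}$, with $\lambda_{1}$ the first Dirichlet eigenvalue of the unit $(N-1)$–ball; since $V_{*}$ is bounded on the necks (they stay away from all poles), for small $\rho$ the neck constant exceeds $\frac{(N-2)^{2}}{4}$ by an arbitrarily large margin. \textbf{The main obstacle is then purely at the ball–neck junctions}: a function in $C_{0}^{\infty}(U_{\epsilon})$ does not vanish there, so the single–pole Hardy inequality cannot be applied verbatim to $u$ restricted to a ball. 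I would resolve this by an IMS (partition of unity) localization $\sum_{k}\chi_{k}^{2}=1$, with $\chi_{i}$ supported near $a_{i}$ and all transitions confined to the necks; the localization error $\int\big(\sum_{k}|\nabla\chi_{k}|^{2}\big)|u|^{2}$ is then supported on the necks, carries a bounded potential weight, and is dominated by the large $\lambda_{1}\rho^{-2}$ surplus of the neck energy. Taking $\rho\ll\delta\to 0$ drives the global constant to $\frac{(N-2)^{2}}{4}$, the remaining work being the careful bookkeeping that keeps the junction contributions from degrading the constant below $\frac{(N-2)^{2}}{4}-\epsilon$.
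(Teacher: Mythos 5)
Your proposal is correct and follows essentially the same route as the paper: the paper also takes $U_{\epsilon}=\bigcup_{i}B(a_{i},r_{\epsilon})$ as a disjoint union of small balls, decomposes $u$ over the connected components, and exploits that the ratio of $V_{*}$ to the single-pole potential $|x-a_{i}|^{-2}$ tends to $1$ at each pole (phrased there via the weight $W_{1}=V_{+\infty}/V_{*}$ and Theorem 2.1 rather than the classical inequality (\ref{T-4-1}), which is the same thing on a small ball where $V_{+\infty}=|x-a_{i}|^{-2}$). The paper accepts the disconnected union as its ``domain,'' so your neck-plus-IMS construction for connectedness, while reasonable, addresses a refinement the paper does not attempt.
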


\begin{proof}
We take  $U_{\epsilon}=\bigcup\limits_{i=1}^{n}B(a_{i},r_{\epsilon})\subset\Omega$, where $r_{\epsilon}$ small enough so that $B(a_{i},r_{\epsilon})\bigcap B(a_{j},r_{\epsilon})=\emptyset$ for any $i\neq j$. From (\ref{*1}) we have
\begin{equation}\label{T-4-5}
\int_{\Omega}|\nabla u|^{2}dx\geq\frac{(N-2)^{2}}{4}\int_{\Omega}W_{1}V_{*}|u|^{2}dx,
\end{equation}
where $W_{1}=V_{+\infty}V_{*}$. In view of the behavior of $V_{+\infty}$ and $V_{*}$ near each pole we have
\begin{equation*}
\lim\limits_{x\rightarrow a_{i}}W_{1}(x)=1, i=1,\ldots,n.
\end{equation*}

Then for any $\epsilon>0$, we choose $r_{\epsilon}$ small enough, so that
\begin{equation}\label{T-4-6}
|W_{1}(x)-1|<\delta(\epsilon),\hspace{2mm}\forall x\in B(a_{i},r_{\epsilon}),\hspace{2mm}i=1,\ldots,n,
\end{equation}
where $\delta(\epsilon)=\frac{4\epsilon}{(N-2)^{2}}$. Since $U_{\epsilon}$ is composed of $n$ connected branch, for any $u\in C_{0}^{\infty}(U_{\epsilon})$, we can denote $u=\sum_{i=1}^{n}u_{i}$, here $u_{i}\in C_{0}^{\infty}(B(a_{i},r_{\epsilon}))$. Then, combining (\ref{T-4-5}) and (\ref{T-4-6}) we have
\begin{equation*}
\begin{split}
\int_{\Omega}|\nabla u|^{2}dx= & \sum_{i=1}^{n}\int_{B(a_{i},r_{\epsilon})}|\nabla u_{i}|^{2}dx \\
   \geq & \sum_{i=1}^{n}\frac{(N-2)^{2}}{4}\int_{B(a_{i},r_{\epsilon})}W_{1}V_{*}|u|^{2}dx \\
   \geq & \sum_{i=1}^{n}\left(\frac{(N-2)^{2}}{4}(1-\delta(\epsilon))\right)\int_{B(a_{i},r_{\epsilon})}V_{*}|u|^{2}dx\\
   = & \sum_{i=1}^{n}\left(\frac{(N-2)^{2}}{4}-\epsilon\right)\int_{B(a_{i},r_{\epsilon})}V_{*}|u_{i}|^{2}dx  \\
   = & \left(\frac{(N-2)^{2}}{4}-\epsilon\right)\int_{\Omega}V_{*}|u|^{2}dx.
\end{split}
\end{equation*}

The proof of Proposition 4.4 is completed.
\end{proof}

We end this paper by concluding a problem presented in  \cite{25}.
\begin{corollary}
Let $N\geq 3$ and $\Omega\subset\mathbb{R}^{N}$ is a bounded domain with $a_{1},\ldots,a_{n}\in\Omega$, $n\geq 2$. Then for the following optimization problem
\begin{equation*}
\mu_{\Omega}:=\inf\limits_{u\in D^{1,2}(\Omega)}\frac{\int_{\Omega}|\nabla u|^{2}dx}{\int_{\Omega}V_{*}|u|^{2}dx},
\end{equation*}
we have
\begin{equation}\label{T-4-7}
\frac{(N-2)^{2}}{4n}< \mu_{\Omega}\leq\frac{(N-2)^{2}}{4}.
\end{equation}
\end{corollary}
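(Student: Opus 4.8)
The plan is to prove the two bounds in \eqref{T-4-7} separately, the upper bound being elementary and the strict lower bound being the substantive part. For the upper bound I would first note the pointwise inequality $V_*=\sum_{j=1}^n|x-a_j|^{-2}\ge|x-a_1|^{-2}$, valid since every summand is nonnegative, so that $\int_\Omega V_*|u|^2\,dx\ge\int_\Omega|x-a_1|^{-2}|u|^2\,dx$ for every $u$ and hence
\begin{equation*}
\frac{\int_\Omega|\nabla u|^2\,dx}{\int_\Omega V_*|u|^2\,dx}\le\frac{\int_\Omega|\nabla u|^2\,dx}{\int_\Omega|x-a_1|^{-2}|u|^2\,dx}.
\end{equation*}
Taking the infimum over $u$ and invoking the sharpness of the single-pole Hardy inequality \eqref{T-4-1} at the pole $a_1\in\Omega$, whose optimal constant is $\frac{(N-2)^2}{4}$, gives $\mu_\Omega\le\frac{(N-2)^2}{4}$.

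For the lower bound, the non-strict estimate $\mu_\Omega\ge\frac{(N-2)^2}{4n}$ is immediate from Corollary 2.4 with $\alpha_i=\frac1n$, because $V_*=n\,V_1$. To promote this to a strict inequality I would use the improved bound recorded in the discussion preceding Theorem 4.3, obtained from \eqref{T-3-7} by bounding each interaction term from below via $|x-a_j|\le\diam\Omega$ on the bounded domain $\Omega$: with $\alpha_i=\frac1n$ it reads
\begin{equation*}
\int_\Omega|\nabla u|^2\,dx\ge\frac{(N-2)^2}{4}\sum_{i=1}^n\widetilde\alpha_i\int_\Omega\frac{|u|^2}{|x-a_i|^2}\,dx,\qquad \widetilde\alpha_i=\frac1n+\frac{1}{2n^2(\diam\Omega)^2}\sum_{j\ne i}|a_i-a_j|^2.
\end{equation*}
Since $n\ge2$ and the poles are distinct, $\sum_{j\ne i}|a_i-a_j|^2>0$, so $\widetilde\alpha_i>\frac1n$ for every $i$. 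Setting $m:=\min_{1\le i\le n}\widetilde\alpha_i>\frac1n$ and using $\sum_i\widetilde\alpha_i|x-a_i|^{-2}\ge m\,V_*$ gives $\int_\Omega|\nabla u|^2\,dx\ge\frac{(N-2)^2}{4}\,m\int_\Omega V_*|u|^2\,dx$ for all $u$, whence $\mu_\Omega\ge\frac{(N-2)^2}{4}\,m>\frac{(N-2)^2}{4n}$.

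The only real difficulty is the strictness of the lower bound: Corollary 2.4 by itself yields merely $\mu_\Omega\ge\frac{(N-2)^2}{4n}$, and the strict gain genuinely requires the extra nonnegative interaction terms of \eqref{T-3-7} together with the boundedness of $\Omega$, which supplies the uniform bound $|x-a_j|\le\diam\Omega$ needed to absorb those cross terms into the diagonal coefficients $\widetilde\alpha_i$. This is precisely where the hypothesis that $\Omega$ be bounded cannot be dropped: on $\Omega=\mathbb R^N$ the constant $\frac{(N-2)^2}{4n}$ is already sharp by Corollary 2.4, so no strict improvement is available there.
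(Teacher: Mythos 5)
Your proof is correct and follows essentially the same route as the paper: the strict lower bound $\mu_\Omega>\frac{(N-2)^2}{4n}$ is exactly the paper's argument via \eqref{T-3-7} and the coefficients $\tilde{\alpha}_i$ (equivalently, the first inequality of Theorem 4.3 applied with $\gamma_i=1$), and your use of the boundedness of $\Omega$ to make each $\tilde{\alpha}_i$ strictly exceed $\frac1n$ is precisely the paper's mechanism. For the upper bound you use the pointwise domination $V_*\ge|x-a_1|^{-2}$ together with the sharpness of the unipolar inequality \eqref{T-4-1} on a domain containing the pole, which is a marginally more direct version of the localization-and-contradiction argument in the proof of Theorem 4.3; both reduce to the same sharpness fact, so this is a cosmetic rather than substantive difference.
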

\textbf{Funding}
 {This work is  supported by
  the National Natural Science Foundation of China (Grant No. 11771395 and 12071431).}


\end{document}